\newcommand{\C}{{\mathbb C}}
\newcommand{\N}{{\mathbb N}}
\newcommand{\R}{{\mathbb R}}
\newcommand{\Z}{{\mathbb Z}}
\theoremstyle{plain}
\newtheorem{Theorem}{Theorem}[section]
\newtheorem{Corollary}[Theorem]{Corollary}
\newtheorem{Lemma}[Theorem]{Lemma}
\newtheorem{Proposition}[Theorem]{Proposition}
\theoremstyle{definition}
\newtheorem{Definition}[Theorem]{Definition}
\newtheorem{Remark}[Theorem]{Remark}
\newtheorem{Example}[Theorem]{Example}
\title{Analytic functions in shift-invariant spaces and analytic limits of level dependent subdivision}
\author{
 Maria Charina\\
 Fakult\"at f\"ur Mathematik \\ 
 Universit\"at Wien, Austria
 \and
 Vladimir Yu. Protasov\\
 DISIM of University of L'Aquila, Italy, \\
 and Department of Mechanics and Mathematics \\
 Moscow State University, Russia}
\begin{document}

\maketitle

\begin{abstract}
The structure of exponential subspaces of finitely generated shift-invariant spaces is well understood and the role of such subspaces for the approximation power of refinable function
vectors and related multi-wavelets is well studied.
However, even in the univariate setting, the structure of all analytic subspaces of such shift-invariant spaces has not been yet revealed.
In this paper, in the univariate setting, we characterize all analytic subspaces of finitely generated shift-invariant spaces and provide explicit descriptions of elements of
such subspaces. Consequently, we depict the analytic functions generated by subdivision schemes with  masks of bounded and unbounded support. And we confirm the belief that the exponential polynomials are indeed the only analytic functions generated  by level
dependent (non-stationary) subdivision
schemes with finitely supported masks.
\end{abstract}

{\bf Keywords:}
analytic subspaces, finitely generated shift-invariant spaces, refinable functions, level dependent
(non-stationary) subdivision

{\bf 2010 MSC:} 46E30 primary, 41A30, 65D17 secondary

\section{Introduction}

\noindent

Shift-invariant spaces have been well studied in the literature in various
contexts. On one hand, such spaces arise  naturally in the signal processing, where they model signals generated by integer shifts of some basic signals~\cite{signals0, signals1, signals2}. On the other hand, approximation properties of shift-invariant spaces have been put to
good use in the study of representation systems, affine synthesis and related issues, see e.g. \cite{FO, Ter}
and references therein.
One of the most popular and well studied applications is the theory of wavelets and multi-wavelets. Deep analysis of structural and approximation properties of
wavelet generated spaces can be found in~\cite{BDR1, BDR2, BDR3, DS, Jia-line, Jia98}.
A closely related subject is subdivision schemes for numerical approximation and for generating curves and
surfaces. The set of limit functions of a subdivision scheme is a shift-invariant
space.

There is a vast number of results on finitely generated shift-invariant spaces.  In this paper, we focus on
the case of compactly supported generators and address the problem of
classifying all analytic functions in the corresponding shift-invariant space. Note that the generators themselves
are not analytic, since they are compactly supported. From the practical point of view
our results can be interesting for both signal processing (for characterizations of
all analytic signals generated by shifts of finitely many signals) and
for wavelet theory (e.g. in the context of wavelet expansions of the solutions
of differential equations). Our original motivation for studying  the structure of analytic subspaces of shift-invariant spaces
comes from  subdivision schemes and refinability.

Intrinsic connection between refinable functions and recursive algorithms called subdivision
schemes is well known \cite{CHM2, CDM, CCGP, ChuiV, Dyn, DynLevinSurvey, ReifPeter08, Protasov_book, W}. Indeed, convergent subdivision schemes generate refinable functions or sequences
of jointly refinable functions. In return, existence of solutions of refinement equations or systems of refinement
equations together with linear independence guarantee the convergence of the underlying subdivision schemes.

Approximation and structural properties of the corresponding shift-invariant spaces depend on the
structure of their polynomial or exponential polynomial subspaces, see e.g.
\cite{CHM1, ContiCotroneiRomani,  DR, ALevin, Ron_intro_s_i_spaces} and references therein.
In most cases the solutions
of refinement equations or the corresponding subdivision limits are not known analytically. Therefore,
the description of their approximation/generation properties is usually done in terms of the finite
sequences of coefficients of the refinement equations or, equivalently, in terms of the corresponding trigonometric polynomials or, equivalently, in terms of the Strang-Fix conditions on the Fourier transforms of
the refinable functions.  Such characterizations are well known in both level independent (stationary)
and level dependent (non-stationary) settings, see e.g. \cite{CHM2, CDM, CharinaContiRomani13, ContiRomani11,
DynLevinSurvey, JePlo}. Moreover, polynomial generation is necessary for linear independence or the convergence
of subdivision in the stationary setting \cite{CDM, CCJZ}. In the non-stationary setting, similar results involving exponential
polynomial generation are not true. Indeed, there exist convergent non-stationary schemes that do not generate any
exponential polynomials \cite{ContiRomaniYoon}. Naturally, a question arises if there exist other
classes of analytic functions rather than exponential polynomials whose generation is necessary
for the convergence of non-stationary subdivision? Furthermore, existence of such classes of functions
would enrich our understanding of the palette of shapes generated by subdivision.

\medskip
 In this paper, we deal with the univariate setting and answer three related \emph{open questions}:

\medskip \noindent
1. What is the exact structure of all analytic subspaces of the shift-invariant spaces
generated by finitely many compactly supported (not necessarily  refinable) functions?

\medskip \noindent 2. How does the number of generators effect the analytic subspaces of the corresponding shift-invariant space?

\medskip \noindent 3. How big are such analytic subspaces  generated by level dependent subdivision schemes with finite masks?

\medskip \noindent
The answer to the first question also provides a characterization of all analytic subspaces generated by vector subdivision schemes. Polynomial subspaces generated by vector subdivision schemes and
approximation properties of the related multi-wavelets were thoroughly studied in the stationary case e.g. in
\cite{CHM1, CH, JiaJiang, MS97, Plonka, PlonkaRon}.

The answer to the second question sheds the light onto  approximation properties of the non-stationary
subdivision scheme with masks of unbounded support e.g. by Rvachev \cite{Rvachev} or of the related constructions in \cite{CohenDyn, Han2}.

\smallskip
This paper is organized as follows: In section \ref{sec:analytic_spaces_phi},  we show first that
the analytic subspace of the shift-invariant space generated by compactly supported (not necessarily refinable) functions $\phi_1, \ldots, \phi_n$ consists of the analytic functions
$$
  f(t)=\sum_{j=1}^s e^{\lambda_j t} \sum_{k=0}^{d_j} \pi_{j,k}(t) \, t^k \, \quad s \in \N,
$$		
for some pairwise distinct modulo $2\pi i$ complex numbers $\lambda_j$, some $d_j \in \N_0$ and some
$1$-periodic analytic  functions $\pi_{j,k}:\R \rightarrow \C$. However this analytic subspace is not merely a
direct sum of spaces of exponential polynomials multiplied by certain $1$-periodic analytic functions.  For
given $\lambda_j$, $d_j \in \N_0$ $\pi_{j,k}:\R \rightarrow \C$, there exist several different shift-invariant
subspaces of analytic functions. The complete characterization of these subspaces is given in Theorem~\ref{th:main_several_generators}.
Indeed, Theorem~\ref{th:main_several_generators} provides an explicit algorithm
for determining all possible analytic subspaces of a given shift-invariant space for given $s$, $d_j$'s.
Moreover, see section \ref{sec:analytic_noref_1}, in the case of a single generator $\phi=\phi_1$,
if the integer shifts of $\phi$ are linearly independent, then the structure of $H$ is characterized by the exponential
decay of the sequences derived from the Fourier transform (analytically extended to $\C$) of $\phi$.
Furthermore, see section \ref{sec:analytic_ref_1}, if additionally the Fourier transform of the generator $\phi=\phi_1$ satisfies the generalized refinability
property
$$
 \hat{\phi}(y)=\prod_{j=1}^\infty \, a_j(2^{-j}y), \quad y \in \R,
$$
with some trigonometric polynomials $a_j$, then the $1$-periodic analytic functions $\pi_{j,k}$ are trigonometric polynomials.
If all trigonometric polynomials $a_j$ are the same, then all $\pi_{j,k}$ are constant. In section \ref{sec:subdivision}, we
first recall the basic facts about subdivision and then interpret the results from section \ref{sec:analytic_spaces_phi}
accordingly. In particular, we confirm the following beliefs: every analytic limit of a stationary subdivision
scheme is a polynomial; every analytic limit of a non-stationary subdivision
scheme is a exponential polynomial.  {\bf Moreover, Theorem~\ref{th:structure_H_n1} together with generalized refinability \eqref{eq:refinable_equation} offer an algorithm for constructing all possible non-stationary
subdivision schemes with desired generation properties.}


\section{Analytic functions in finitely generated shift invariant spaces} \label{sec:analytic_spaces_phi}

\noindent There is a multitude of results in the literature about the properties of shift-invariant spaces
generated by $\Phi=\{\phi_1, \ldots, \phi_n\}$ with each $\phi_j$ of compact support, see \cite{Ron_intro_s_i_spaces} and
references therein. We denote by $V_\Phi$
the corresponding shift-invariant space
$$
 V_\Phi=\operatorname{closure}\left\{ \sum_{j=1}^n \sum_{k \in \Z} c(j,k) \phi_j(\cdot-k)\ : \ c(j,\cdot) \in \ell(\Z) \right\}
$$
with $\ell(\Z)$ being the space of sequences over $\C$. Such spaces for $n=1$ arise in the context of stationary and non-stationary subdivision, while for $n>1$ in the context of vector subdivision schemes and multi-wavelets.

\noindent Our goal is to expose the classes of analytic functions that belong to $V_\Phi$. Under analytic functions \cite{Conway} we mean functions infinitely differentiable on $\R$ and having a power series
expansion around each point in $\R$.


\noindent Our characterizations make use of the following exponential spaces.

\begin{Definition} \label{def:U_exponential} Let $\Lambda \subset \C$ and ${\cal P}_k$ the space of polynomials of degree less than or
equal to $k$. The space $U \subset L_{1,\hbox{loc}}(\R)$ is called \emph{exponential} if
	$$
	 U=\hbox{span} \{ p(\cdot) e^{\lambda \cdot} \ : \ p \in {\cal P}_{k(\lambda)}, \ \lambda \in \Lambda  \}
	 \quad \hbox{dim}(U)=\sum_{\lambda \in \Lambda} (k(\lambda)+1),
	$$
	with the integer $k(\lambda)$ being the multiplicity of $\lambda \in \Lambda$.
\end{Definition}

\noindent In subsection \ref{sec:analytic_noref_n}, we characterize the set of all analytic functions that belong
to $V_\Phi$ without any additional assumptions on the generators in $\Phi$.
In section \ref{sec:analytic_noref_1}, we study the special case of $n=1$ under the assumption that
the integer shifts of $\phi=\phi_1$ are linearly independent. In section \ref{sec:analytic_ref_1}, we additionally
assume the generalized refinability of $\phi=\phi_1$.

\subsection{Case $n \ge 1$} \label{sec:analytic_noref_n}

\noindent In this subsection, we study the structure of the subspace $H$ of analytic functions in
the shift-invariant space $V_\Phi$. The main result of this section, Theorem \ref{th:main_several_generators}, states that
the expected contribution of the exponential spaces $U$ to the structure of $H$ has to be unexpectedly
augmented by contributions of certain $1$-periodic analytic functions. The proof of Theorem \ref{th:main_several_generators}
is based on auxiliary results from subsection \ref{subsec:aux_results} and on
Theorem~\ref{th:structure_H}.

\noindent The next Example shows that the presence of $1$-periodic analytic functions in the subspace $H$ is very natural.

\begin{Example} \label{ex:periodic_functions_in_V} Let $\pi$ be $1$-periodic analytic function. For a B-spline $\psi$, define
$$
 \phi=\psi \cdot \pi.
$$
Then, due to the partition of unity property of $\psi$ and periodicity of $\pi$, we have
$$
 \sum_{k \in \Z} \phi(\cdot-k)=\sum_{k \in \Z} \psi(\cdot-k)\, \pi(\cdot-k)=
  \pi \sum_{k \in \Z} \psi(\cdot-k)=\pi,
$$
i.e. $\pi \in V_\phi$.
\end{Example}

\subsubsection{Auxiliary results} \label{subsec:aux_results}

\noindent To study the structure of the subspace $H$, we make use of the following well known difference operator.

\begin{Definition}  \label{def:exp_diff_operator}
The exponential difference operator $\nabla_\lambda$ on $V_\Phi$ is defined by
$$
 \nabla_\lambda : V_\Phi \rightarrow V_\Phi, \quad  \nabla_\lambda(f)=e^{-\lambda}\, f(\cdot+1)- f, \quad \lambda \in \C.
$$
\end{Definition}

\noindent In Lemma \ref{lem:aux1}, we recall the action of the powers of the difference operator $\nabla_\lambda$ on
the products of the form $\pi(t)p(t) e^{\lambda t}$ with  a $1$-periodic function $\pi$ and an algebraic polynomial $p$.
Such products are shown in the sequel to be the building blocks of the elements of  $H$. Clearly, powers of $\nabla_\lambda$ annihilate  $\pi(t)p(t) e^{\lambda t}$.
On the contrary,  $\nabla_\lambda$ does not affect the structure of such products, if we replace the term $e^{\lambda t}$
by $ e^{\mu t}$ with $\mu \not = \lambda$. We present the proof of this straightforward fact to illustrate that, even in the presence of the $1$-periodic function $\pi$, the action of the difference operator $\nabla_\lambda$ is inherited from its action on the exponential spaces $U$.

\begin{Lemma} \label{lem:aux1} Let $n \in \N$ and $\lambda, \mu \in \C$, $\lambda \not=\mu$. Then for every
$1$-periodic function $\pi: \R \rightarrow \C$ and every polynomial $p \in {\cal P}_{n-1}$, we have
\begin{itemize}
\item[$(i)$] $\nabla_\lambda^n (\pi \, p \,e^{\lambda \, t})=0$,
\item[$(ii)$] $\nabla_\lambda (\pi\, p\, e^{\mu \, t})=\pi\, \widetilde{p}\, e^{\mu \, t}$ for some $\widetilde{p} \in {\cal P}_{n-1}$
with $\deg(\widetilde{p})=\deg(p)$.
\end{itemize}
\end{Lemma}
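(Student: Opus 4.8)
The plan is to prove both identities by direct computation, treating the $1$-periodic factor $\pi$ as an essentially inert multiplier and reducing everything to the classical behaviour of $\nabla_\lambda$ on exponential polynomials.

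For part $(i)$, first I would record the basic action of $\nabla_\lambda$ on a pure monomial times the matching exponential. Since $\pi(t+1)=\pi(t)$, we have
\[
 \nabla_\lambda\bigl(\pi(t)\, t^k\, e^{\lambda t}\bigr)
 = e^{-\lambda}\pi(t+1)(t+1)^k e^{\lambda(t+1)} - \pi(t)\, t^k e^{\lambda t}
 = \pi(t)\, e^{\lambda t}\bigl((t+1)^k - t^k\bigr),
\]
and $(t+1)^k - t^k$ is a polynomial of degree exactly $k-1$. Hence $\nabla_\lambda$ maps $\pi\cdot{\cal P}_{n-1}\cdot e^{\lambda t}$ into $\pi\cdot{\cal P}_{n-2}\cdot e^{\lambda t}$, strictly lowering the polynomial degree by one while preserving the factors $\pi$ and $e^{\lambda t}$. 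By linearity this applies to an arbitrary $p\in{\cal P}_{n-1}$, and iterating $n$ times kills the polynomial part entirely, giving $\nabla_\lambda^n(\pi\, p\, e^{\lambda t})=0$. This is really just the observation that conjugating $\nabla_\lambda$ by multiplication by $e^{\lambda t}$ turns it into the ordinary forward difference $p(t)\mapsto p(t+1)-p(t)$ (up to the harmless $\pi$ factor), which is nilpotent of order $\dim{\cal P}_{n-1}=n$ on ${\cal P}_{n-1}$.

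For part $(ii)$, the same computation with $\mu\neq\lambda$ gives
\[
 \nabla_\lambda\bigl(\pi(t)\, t^k\, e^{\mu t}\bigr)
 = e^{-\lambda}\pi(t+1)(t+1)^k e^{\mu(t+1)} - \pi(t)\, t^k e^{\mu t}
 = \pi(t)\, e^{\mu t}\Bigl(e^{\mu-\lambda}(t+1)^k - t^k\Bigr).
\]
Writing $q_k(t)=e^{\mu-\lambda}(t+1)^k - t^k$, the leading term is $(e^{\mu-\lambda}-1)t^k$, and $e^{\mu-\lambda}-1\neq 0$ precisely because $\mu\neq\lambda$; so $\deg q_k = k$. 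For a general $p\in{\cal P}_{n-1}$ with $p(t)=\sum_{k}c_k t^k$ and $c_{\deg p}\neq 0$, linearity gives $\nabla_\lambda(\pi\, p\, e^{\mu t}) = \pi\, \widetilde p\, e^{\mu t}$ with $\widetilde p = \sum_k c_k q_k$; the coefficient of $t^{\deg p}$ in $\widetilde p$ is $c_{\deg p}(e^{\mu-\lambda}-1)\neq 0$, so $\deg\widetilde p = \deg p$ and in particular $\widetilde p\in{\cal P}_{n-1}$.

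I do not anticipate a genuine obstacle here — the statement is flagged as "straightforward" in the text, and the only thing one must be slightly careful about is the bookkeeping of which polynomial degrees survive: that $\mu\neq\lambda$ is used exactly once, to guarantee the non-vanishing of the leading coefficient $e^{\mu-\lambda}-1$ in part $(ii)$, whereas in part $(i)$ it is the cancellation of the leading terms (the $e^{\lambda-\lambda}-1=0$ phenomenon) that forces the degree drop and hence nilpotency. The periodicity of $\pi$ enters only through the single identity $\pi(t+1)=\pi(t)$, which is what lets $\pi$ be pulled out unchanged at every step; no smoothness or analyticity of $\pi$ is needed for this lemma.
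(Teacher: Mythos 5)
Your proof is correct and follows essentially the same route as the paper's: a direct computation using $\pi(t+1)=\pi(t)$ to pull the periodic factor out, with the degree drop in $(i)$ coming from cancellation of leading coefficients and the degree preservation in $(ii)$ coming from the nonvanishing of $e^{\mu-\lambda}-1$ (the paper works with $p$ directly rather than monomial-by-monomial, but that is an immaterial difference). Note only that, exactly as in the paper's own argument, $e^{\mu-\lambda}-1\neq 0$ really requires $\mu-\lambda\notin 2\pi i\Z$ rather than merely $\mu\neq\lambda$, which is harmless here since the lemma is applied to exponents that are pairwise distinct modulo $2\pi i$.
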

\begin{proof}
\noindent Part $(i)$: By Definition \ref{def:exp_diff_operator}, we get
$$
 \nabla_\lambda(\pi(t) p(t) e^{\lambda t})=\pi(t) p(t+1) e^{\lambda t}-\pi(t) p(t) e^{\lambda t}=
 \pi(t) e^{\lambda t} \left( p(t+1)-p(t) \right), \quad t \in \R,
$$
where the polynomial $p(\cdot+1)-p \in {\cal P}_{n-2}$, since the leading coefficients of $p(\cdot+1)$ and $p$
are equal. Applying $\nabla_\lambda$ iteratively, we obtain the claim.

\noindent Part $(ii)$: Similarly to $(i)$, the claim follows, due to
$$
  \nabla_\lambda(\pi(t) p(t) e^{\mu t})=\pi(t) p(t+1) e^{\mu (t+1)-\lambda}-\pi(t) p(t) e^{\mu t}=
 \pi(t) e^{\mu t} \left(e^{\mu-\lambda} p(t+1)- p(t) \right),
$$
where the polynomial $\widetilde{p}=e^{\mu-\lambda} p(\cdot+1)-  p \in {\cal P}_{n-1}$, since $\lambda \not = \mu$.
\end{proof}

\noindent Our next result, Lemma \ref{lem:everything_then_one}, states that any $1$-periodic function, appearing in the
representation of $f \in H$, is analytic. It also exposes the finer structure of $H$. We first illustrate the idea of the proof of Lemma \ref{lem:everything_then_one} on the following example.

\begin{Example} Assume that, for $1$-periodic functions $\pi_{1,0}, \pi_{2,0}, \pi_{2,1}:\R \rightarrow \C$ and $\lambda, \mu \in \C$,
$\lambda \not = \mu$, the function
$$
 f(t)=\pi_{1,0}(t) \, e^{\lambda t} +(\pi_{2,0}(t) + \pi_{2,1}(t)\, t) \, e^{\mu t}, \quad t \in \R,
$$
belongs to $H$. Then, applying $\nabla_\lambda$ to eliminate the term with $e^{\lambda t}$, we obtain that
 \begin{eqnarray*}
 \nabla_{\lambda}(f)(t)&=& \pi_{1,0}(t)\, e^{\lambda t}+\left( \pi_{2,0}(t) + \pi_{2,1}(t)\,  (t+1)\right) e^{\mu (t+1)-\lambda}-
 \pi_{1,0}(t)\, \, e^{\lambda t}\\
&-& \left(\pi_{2,0}(t) + \pi_{2,1}(t)\,  t \right) e^{\mu t} =
\Big(\pi_{2,0}(t) \, (e^{\mu-\lambda}-1)+ \pi_{2,1}(t)\,  e^{\mu-\lambda} + \pi_{2,1}(t) \, (e^{\mu-\lambda}-1)\, t \Big) e^{\mu t}
\end{eqnarray*}
belongs to $H$, due to the shift-invariance of $V_\Phi$. Similarly, we apply $\nabla_\mu$ to eliminate
the ''constant'' term in $\nabla_\lambda(f)$ and get that
$$
 \nabla_{\mu} (\nabla_\lambda(f))(t)= e^{-\mu}\nabla_\lambda(f)(t+1)- \nabla_\lambda(f)(t)=
 \pi_{2,1}(t) \,  (e^{\mu-\lambda}-1) e^{\mu t}
$$
is in $H$. This implies also that the $1$-periodic function $\pi_{2,1}$ is analytic, due to the
analyticity of $\nabla_{\mu} (\nabla_\lambda(f))$ and $e^{\mu t}$.  Moreover, the analyticity
of $\nabla_{\lambda}(f)$, $\pi_{2,1}(t) \, e^{\mu t}$ and $\pi_{2,1}(t) \, t\, e^{\mu t}$, implies that
the $1$-periodic function $\pi_{2,0}$ is analytic. And, finally, considering
$$
 f(t)-(\pi_{2,0}(t) + \pi_{2,1}(t)\, t) \, e^{\mu t} =\pi_{1,0}(t) \, e^{\lambda t}
$$
yields that $\pi_{1,0}$ is analytic.
\end{Example}

\noindent Now we are ready to formulate and prove Lemma~\ref{lem:everything_then_one}.

\begin{Lemma} \label{lem:everything_then_one}
Let $H \subset V_\Phi$ be a subspace of all analytic functions. If there exist $s \in \N$, $\lambda_j \in \C$ (pairwise distinct modulo $2\pi i$) and $d_j \in \N_0$, $j=1, \ldots,s$, such that
\begin{equation} \label{eq:f_in_H}
f(t)=\sum_{j=1}^s e^{\lambda_j t} \sum_{k=0}^{d_j} \pi_{j,k}(t) \, t^k \, ,\quad  \pi_{j,k}:\R \rightarrow \C \ \hbox{are $1$-periodic},
\end{equation}
belongs to $H$, then, for $j=1, \ldots,s$,
\begin{description}
\item[$(i)$] $\pi_{j,k}$, $k=0, \ldots, d_j$, are analytic and
\item[$(ii)$] there exist polynomials $p_{j,k}$ with
$\operatorname{deg}(p_{j,k})=k$, $k=0, \ldots, d_j$, such that $\displaystyle e^{\lambda_j t} \sum_{k=0}^{d_j} p_{j,k}(t) \pi_{j,k}(t)$ belong to $H$.	
\end{description}
\end{Lemma}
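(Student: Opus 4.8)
**The plan is to prove Lemma~\ref{lem:everything_then_one} by induction on the total "order" of $f$, peeling off one exponential-polynomial block at a time using the difference operators $\nabla_{\lambda_j}$, exactly as the preceding Example illustrates.** Concretely, order the pairs $(j,k)$ lexicographically with $j$ fixed and then $k$ descending, and induct on $D := \sum_{j=1}^s (d_j+1)$, the number of scalar $1$-periodic coefficients appearing in \eqref{eq:f_in_H}. The base case $D=1$ is $f(t)=\pi_{1,0}(t)e^{\lambda_1 t}$, where analyticity of $\pi_{1,0}$ follows immediately from analyticity of $f$ and of $e^{\lambda_1 t}$ (division by a nowhere-vanishing analytic function), and one takes $p_{1,0}\equiv 1$, so both $(i)$ and $(ii)$ hold trivially.

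For the inductive step, I would isolate the highest-degree term of the \emph{first} block, $\pi_{1,d_1}(t)\,t^{d_1}e^{\lambda_1 t}$. Apply the operator $\nabla_{\lambda_2}^{\,d_2+1}\nabla_{\lambda_3}^{\,d_3+1}\cdots\nabla_{\lambda_s}^{\,d_s+1}\nabla_{\lambda_1}^{\,d_1}$ to $f$. By Lemma~\ref{lem:aux1}$(i)$ each factor $\nabla_{\lambda_i}^{\,d_i+1}$ annihilates the entire $i$-th block for $i\ge 2$, and the final $\nabla_{\lambda_1}^{\,d_1}$ kills all terms of the first block of degree $<d_1$ while, by Lemma~\ref{lem:aux1}$(i)$-$(ii)$ applied carefully, it reduces $\pi_{1,d_1}(t)\,t^{d_1}e^{\lambda_1 t}$ to $c\,\pi_{1,d_1}(t)\,e^{\lambda_1 t}$ for an explicit nonzero constant $c$ depending only on the $\lambda_i$'s and $d_i$'s (the same telescoping computation as in the Example, where $c=(e^{\mu-\lambda}-1)$). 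Here one uses shift-invariance of $V_\Phi$ to conclude the result still lies in $H$, hence is analytic; dividing by $c\,e^{\lambda_1 t}$ shows $\pi_{1,d_1}$ is analytic. Then $f - \pi_{1,d_1}(t)\,t^{d_1}e^{\lambda_1 t}\in H$ is again of the form \eqref{eq:f_in_H} but with $D$ decreased by one (either $d_1$ drops by one, or the first block disappears entirely), so the induction hypothesis applies and yields analyticity of all remaining $\pi_{j,k}$, proving $(i)$.

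For part $(ii)$, I would again induct on $D$, but now \emph{building up} a single element of $H$. Having established $(i)$, apply to $f$ the operator $\Delta := \prod_{i=1}^s \nabla_{\lambda_i}^{\,d_i}$. By Lemma~\ref{lem:aux1}, $\Delta$ sends each term $\pi_{j,k}(t)\,t^k e^{\lambda_j t}$ to a term of the form $\pi_{j,k}(t)\,q_{j,k}(t)\,e^{\lambda_j t}$ where $\deg q_{j,k}=\max(0,\,k-\sum_{i\ne j}d_i-d_j)$; choosing the exponents so that exactly the leading term of each block survives gives $\Delta f = \sum_j c_j\,\pi_{j,d_j}(t)\,e^{\lambda_j t}\in H$ — this is the desired element with $p_{j,d_j}$ constant. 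To get the full statement one iterates: subtract off appropriate multiples and apply lower-order products of the $\nabla_{\lambda_i}$, at each stage producing an element $e^{\lambda_j t}\sum_k p_{j,k}(t)\pi_{j,k}(t)\in H$ with $\deg p_{j,k}=k$, since the operators only lower polynomial degrees by controlled amounts and never alter the $1$-periodic factors (Lemma~\ref{lem:aux1}$(ii)$).

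**The main obstacle** is bookkeeping: one must track how the mixed product of difference operators acts on the polynomial degrees across all $s$ blocks simultaneously and verify that by choosing the multiplicities of each $\nabla_{\lambda_i}$ one can make precisely the intended term survive with a nonzero coefficient while the rest collapse. The key facts making this work are already in hand — Lemma~\ref{lem:aux1}$(i)$ gives the annihilation (order $d_i+1$ kills block $i$), Lemma~\ref{lem:aux1}$(ii)$ guarantees that operators with $\mu\ne\lambda$ preserve $1$-periodic factors and preserve polynomial degree, and a direct telescoping computation shows $\nabla_{\lambda}^{\,d}$ applied to $t^d e^{\lambda t}$ yields a nonzero constant times $e^{\lambda t}$ — so the argument is essentially a careful orchestration of these three observations together with the shift-invariance of $V_\Phi$ and the elementary fact that the quotient of two real-analytic functions (with nonvanishing denominator $e^{\lambda t}$) is real-analytic.
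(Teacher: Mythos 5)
Your machinery is the right one (the same commuting operators $\nabla_{\lambda_j}$, Lemma~\ref{lem:aux1}, and shift-invariance of $V_\Phi$ are exactly what the paper uses), and your isolation of $c\,\pi_{1,d_1}(t)e^{\lambda_1 t}\in H$ via $\prod_{i\ge 2}\nabla_{\lambda_i}^{d_i+1}\,\nabla_{\lambda_1}^{d_1}$ is correct. The genuine gap is the inductive step for $(i)$: you claim $f-\pi_{1,d_1}(t)\,t^{d_1}e^{\lambda_1 t}\in H$, but membership in $H$ requires membership in $V_\Phi$, and the single term $t^{d_1}\pi_{1,d_1}(t)e^{\lambda_1 t}$ need \emph{not} lie in $V_\Phi$ even though $\pi_{1,d_1}(t)e^{\lambda_1 t}$ does. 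This is precisely the phenomenon the paper is at pains to expose in Theorem~\ref{th:main_several_generators} and Example~2.10: only those combinations $e^{\lambda_j t}\sum_k p_{j,k}\pi_{j,k}$ with $(p_{j,0},\ldots,p_{j,d_j})$ in an $A_{d_j}$-invariant subspace belong to $H$, and (for instance, with $d_1=1$ and $N_1=\{(a,b,c):a=b\}$) the invariant subspace generated by a tuple with $\deg p_k=k$ can contain $\pi_{1,1}e^{\lambda t}$ and $(\pi_{1,0}+t\,\pi_{1,1})e^{\lambda t}$ without containing $t\,\pi_{1,1}e^{\lambda t}$. So the difference you form is analytic but possibly outside $V_\Phi$, and your induction hypothesis — which presumes $f\in H$ — cannot be invoked on it; the induction on $D$ does not close.

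The paper's proof avoids this by never subtracting anything whose $V_\Phi$-membership is unknown. For each fixed $\ell$ it forms $\widetilde f=\prod_{j\ne\ell}\nabla_{\lambda_j}^{d_j+1}f=e^{\lambda_\ell t}\sum_{k=0}^{d_\ell}p_{\ell,k}\pi_{\ell,k}\in H$ with $\deg p_{\ell,k}=k$ — this single step already delivers part $(ii)$, more directly than your ``build up from $\Delta f=\sum_j c_j\pi_{j,d_j}e^{\lambda_j t}$ and iterate'' plan. For $(i)$ it then applies $\nabla_{\lambda_\ell}^{d_\ell},\nabla_{\lambda_\ell}^{d_\ell-1},\ldots$ to $\widetilde f$ itself: each result is in $H$, hence analytic, and is an explicit combination of $\pi_{\ell,m}$ for $m\ge k$ with polynomial coefficients times $e^{\lambda_\ell t}$; analyticity of $\pi_{\ell,d_\ell},\pi_{\ell,d_\ell-1},\ldots$ is then read off top-down using only the fact that a difference of analytic functions is analytic — no claim that the subtracted pieces lie in $V_\Phi$ is ever needed. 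Your argument can be repaired by adopting this downward sweep within the $\ell$-th block in place of the subtraction $f-\pi_{1,d_1}t^{d_1}e^{\lambda_1 t}$.
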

\begin{proof}
Let $\ell \in \{1, \ldots, s\}$.
Due to Lemma \ref{lem:aux1}, we can eliminate the summands in $f$ with exponential factors $e^{\lambda_j t}$, $j \in \{1,\ldots,s\} \setminus \{\ell\}$,
by applying the corresponding difference operators  $\nabla_{\lambda_j}$ to $f$ each $d_j+1$ times, respectively.
By the shift-invariance of $V_\Phi$ and Definition \ref{def:exp_diff_operator}, the resulting function
$$
  \widetilde{f}(t):=e^{\lambda_\ell t}\, \sum_{k=0}^{d_\ell} \left(\sum_{m=k}^{d_\ell} c_{k,m} \, \pi_{\ell,m}(t) \right) t^k=e^{\lambda_\ell t}\, \sum_{k=0}^{d_\ell} p_{\ell,k}(t) \pi_{\ell,k}(t)  \,  \quad
	 \hbox{belongs to $H$}
$$
and all its coefficients $c_{k,m}$ are non-zero, since $c_{k,m}$ are products of the factors $e^{\lambda_j}$, $j=1,\ldots,d$, or $e^{\lambda_j-\lambda_\ell}-1$,
$j \not = \ell$, and of the binomial coefficients in the expansions of $(t+1)^k$, $k=0, \ldots, d_\ell$.
Therefore, $\operatorname{deg}(p_{\ell,k})=k$, $k=0, \ldots, d_\ell$. Note also that the leading term of $\widetilde{f}$ for $k=d_\ell$ is of the form $c_{d_\ell, d_\ell}\,\pi_{\ell,d_\ell}(t)\, t^{d_\ell} e^{\lambda_\ell t}$. Thus, by Lemma
\ref{lem:aux1}, applying $\nabla_{\lambda_\ell}^{d_\ell}$ to $\widetilde{f}$ leaves us with $d_\ell! \, c_{d_\ell, d_\ell}\,\pi_{\ell,d_\ell}(t)\,
 e^{\lambda_\ell t} \in H$.
The analyticity of $\pi_{\ell,d_\ell}(t)\, e^{\lambda_\ell t}$ and $e^{\lambda_\ell t}$ implies that the function $\pi_{\ell,d_\ell}$ is analytic.
Next we apply to $\widetilde{f}$ the operator $\nabla_{\lambda_\ell}^{d_\ell-1}$ to obtain
$$
    \Big( (d_\ell-1)!c_{d_\ell-1,d_\ell-1} \,\pi_{\ell,d_\ell-1}(t) +  \tilde{c}_{d_\ell-1, d_\ell}\,\pi_{\ell,d_\ell}(t)+ \tilde{c}_{d_\ell, d_\ell}\,\pi_{\ell,d_\ell}(t)\, t\, \Big) e^{\lambda_\ell t} \in H.
$$
Its analyticity and the analyticity of $\pi_{\ell,d_\ell}$ and $e^{\lambda_\ell t}$ imply that $\pi_{\ell,d_\ell-1}$ is analytic. Continuing iteratively yields the claim.
\end{proof}

\subsubsection{Structure of $H$} \label{subsec:structure_H}

\noindent We show first that every $f \in H$ is indeed of the form \eqref{eq:f_in_H} with $1$-periodic analytic
functions $\pi_{j,k}$.

\begin{Theorem} \label{th:structure_H}
Let $H \subset V_\Phi$ be the space of all
analytic functions in $V_\Phi$. Then, there exist $s \in \N$, $\lambda_j \in \C$ (pairwise distinct modulo $2\pi i$),  $d_j \in \N_0$
and $1$-periodic analytic functions $\pi_{j,k}:\R \rightarrow \C$, $k=0,\ldots,d_j$,  $j=1, \ldots,s$,
such that
\begin{equation} \label{def:H}
  H\ \subseteq \ \left\{f \in V_\Phi\,:\, f(t)=\sum_{j=1}^s e^{\lambda_j t} \sum_{k=0}^{d_j} \pi_{j,k}(t)t^k \,   \right\}.
\end{equation}
Moreover,
$$
 \operatorname{dim}(H) \le \sum_{j=1}^n |\operatorname{supp}(\phi_j)|.
$$
\end{Theorem}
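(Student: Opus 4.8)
The plan is to first prove that $H$ is \emph{finite-dimensional} — which, incidentally, yields the ``moreover'' bound — and then to exploit the action of the integer shift on this finite-dimensional space to force every element of $H$ into the claimed exponential-polynomial form.

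For finite-dimensionality, fix a unit interval $I\subset\R$ whose endpoints are not endpoints of the support of any translate $\phi_j(\cdot-k)$. Since each $\phi_j$ is compactly supported, only finitely many translates $\phi_j(\cdot-k)$ fail to vanish on $I$, and in fact at most $|\operatorname{supp}(\phi_j)|$ of them are active on $I$; let $W$ be the span of the restrictions to $I$ of these active translates, so $\operatorname{dim}W\le\sum_{j=1}^n|\operatorname{supp}(\phi_j)|$. Every function in the generating set of $V_\Phi$ restricts on $I$ to an element of $W$, and since $W$ is finite-dimensional (hence closed for the convergence defining $V_\Phi$) we get $f|_I\in W$ for all $f\in V_\Phi$. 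On the other hand, restriction to $I$ is injective on $H$: a nonzero real-analytic function on $\R$ cannot vanish on the interval $I$ by the identity theorem. Hence $\operatorname{dim}H=\operatorname{dim}\{f|_I:f\in H\}\le\operatorname{dim}W\le\sum_{j=1}^n|\operatorname{supp}(\phi_j)|$, which is the asserted bound.

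Next consider $S\colon f\mapsto f(\cdot+1)$. Since $V_\Phi$ is shift-invariant and integer translation preserves analyticity, $S$ restricts to a linear operator on the finite-dimensional space $H$; it is invertible (its inverse is translation by $-1$), so $0$ is not an eigenvalue. By Definition~\ref{def:exp_diff_operator} we have $\nabla_\lambda=e^{-\lambda}S-I$. Let $\mu_1,\dots,\mu_s$ be the distinct eigenvalues of $S|_H$, choose $\lambda_j\in\C$ with $e^{\lambda_j}=\mu_j$ (pairwise distinct modulo $2\pi i$), and write the minimal polynomial of $S|_H$ as $\prod_{j=1}^s(x-\mu_j)^{d_j+1}$ with $d_j\in\N_0$. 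The primary decomposition theorem gives $H=\bigoplus_{j=1}^sH_j$ with $H_j=\ker(S-\mu_jI)^{d_j+1}=\ker\nabla_{\lambda_j}^{\,d_j+1}$. For $f\in H_j$ set $g(t)=e^{-\lambda_j t}f(t)$; a one-line computation (as in the proof of Lemma~\ref{lem:aux1}) gives $\nabla_{\lambda_j}\bigl(e^{\lambda_j t}h(t)\bigr)=e^{\lambda_j t}\bigl(h(t+1)-h(t)\bigr)$ for any $h$, hence by iteration $\nabla_{\lambda_j}^{\,m}f=e^{\lambda_j t}\Delta^m g$ where $\Delta g(t)=g(t+1)-g(t)$. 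Thus $\Delta^{d_j+1}g\equiv0$. Restricting $g$ to a coset $s+\Z$, $s\in[0,1)$, the sequence $n\mapsto g(s+n)$ is annihilated by $\Delta^{d_j+1}$ and so is a polynomial of degree at most $d_j$ in $n=\lfloor t\rfloor$; expanding the powers of $\lfloor t\rfloor=t-\{t\}$ and using that $\{t\}$ and every function of $\{t\}$ are $1$-periodic, one rewrites $g(t)=\sum_{k=0}^{d_j}\pi_{j,k}(t)t^k$ with $\pi_{j,k}$ $1$-periodic, i.e.\ $f(t)=e^{\lambda_j t}\sum_{k=0}^{d_j}\pi_{j,k}(t)t^k$.

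Summing over $j$ shows that every $f\in H$ has the form \eqref{eq:f_in_H} with uniform data $s,\lambda_j,d_j$ and $1$-periodic functions $\pi_{j,k}$; Lemma~\ref{lem:everything_then_one}$(i)$ then upgrades the $\pi_{j,k}$ to analytic functions, giving \eqref{def:H}. The conceptual heart is the reduction to a finite-dimensional $H$ on which the shift acts — after that one runs the standard Jordan/primary-decomposition machinery together with the elementary fact that a function killed by $\Delta^{d+1}$ is a ``periodic polynomial'' of degree $d$. The step I would be most careful about is precisely this finite-dimensionality: it genuinely uses compact support of the generators, and to obtain the constant $\sum_j|\operatorname{supp}(\phi_j)|$ one must choose $I$ so that no support endpoint is interior to $I$ when counting the active translates; one should also confirm that restriction to $I$ is continuous for whatever convergence $V_\Phi$ is closed under, so that $f|_I\in W$ indeed holds for limits and not only for the finite combinations.
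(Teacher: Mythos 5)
Your proof is correct, and it reaches the conclusion by a route that is organized quite differently from the paper's, even though both rest on the same two pillars (finite dimensionality coming from compact support, and the identity theorem for real-analytic functions). The paper never forms the shift operator on $H$ as a global object: instead, for an individual $f\in H$ it observes that the $N+1$ translates $f(\cdot+\ell)$, $\ell=0,\dots,N$, are linearly dependent on $[0,1]$, extends the resulting relation $\sum_\ell a_\ell f(t+\ell)=0$ to all of $\R$ by analyticity, and then reads off the form of $f$ from the explicit solution of the constant-coefficient difference equation satisfied by $\ell\mapsto f(\tau+\ell)$ for each $\tau\in[0,1]$; the final appeal to Lemma~\ref{lem:everything_then_one}$(i)$ is the same as yours. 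Your version first proves $\dim H\le\sum_j|\operatorname{supp}(\phi_j)|$ outright (via injectivity of restriction to $I$) and then runs the primary decomposition of $S|_H$, identifying $\ker(S-\mu_j I)^{d_j+1}$ with $\ker\nabla_{\lambda_j}^{d_j+1}$ and reducing to $\Delta^{d_j+1}g=0$. What your organization buys is worth noting: the spectral data $(\lambda_j,d_j)$ of $S|_H$ is manifestly the \emph{same} for every $f\in H$, which is what the uniform statement of the theorem actually requires and which the paper's per-$f$ difference equation leaves implicit (a priori the roots $\alpha_j$ there depend on $f$, and one must pass to a basis of $H$ to unify them); it also makes the ``moreover'' bound an explicit first step rather than a byproduct. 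What the paper's route buys is elementarity and explicitness: it needs only the classical solution formula for linear recurrences, no minimal polynomial or primary decomposition, and it produces the $\pi_{j,k}$ concretely in terms of the coefficient functions $b_{j,k}(\tau)$. The caveat you flag yourself --- that restriction to $I$ must respect whatever closure defines $V_\Phi$ --- is a genuine looseness, but it is present to exactly the same degree in the paper's own argument, which likewise asserts $\dim(V_\Phi|_{[0,1]})\le\sum_j|\operatorname{supp}(\phi_j)|$ without specifying the topology, so it is not a gap relative to the paper's standard of rigor.
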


\begin{proof}
 Due to the compact supports of $\phi_j$, $j=1, \ldots, n$, there are only finitely many functions
$\phi_j(\cdot-\ell)$, $j=1, \ldots, n$, $\ell \in \Z$, whose supports intersect with the interval $[0,1]$. This finite number we denote by
$$
 N=\hbox{dim}\left(V_\Phi|_{[0,1]} \right) \le \sum_{j=1}^{n} |\hbox{supp}(\phi_j)|.
$$
Therefore, for every $f \in V_\Phi$, the $N+1$ functions $f(\cdot+\ell) \in V_\Phi$, $\ell=0, \ldots, N$, are linearly dependent  over $[0,1]$, namely
$$
 \sum_{\ell=0}^N a_\ell f(t+\ell)=0, \quad t \in [0,1],
$$
where some of the coefficients $a_\ell \in \C$ are non-zero. If, furthermore, $f \in H$, then,
due to the analyticity of this linear combination, we have
\begin{equation} \label{eq:aux2.1}
 \sum_{\ell=0}^N a_\ell f(t+\ell)=0, \quad t \in \R.
\end{equation}
The identity \eqref{eq:aux2.1} implies that, for every $\tau \in [0,1]$, the sequence of
numbers $\{f(\tau+\ell)\ : \ \ell \in \Z\}$ satisfies the linear difference equation with constant
coefficients $a_0, \ldots, a_N$. Let $\alpha_j \in \C$, $j=1, \ldots, s$, $s \le N$, be the roots of the characteristic
polynomial corresponding to \eqref{eq:aux2.1}
and $\mu_j \in \N$ be the multiplicity of $\alpha_j$. Then, the solution of \eqref{eq:aux2.1}, for $\tau \in [0,1]$, has the form
$$
 f(\tau+\ell)=\sum_{j=1}^s e^{\lambda_j \ell}\sum_{k=0}^{d_j} b_{j,k}(\tau) \, \ell^k \, , \quad
d_j:=\mu_j-1, \quad \lambda_j:=\hbox{ln}(\alpha_j), \quad  b_{j,k}: [0,1] \rightarrow \R,
\quad \ell \in \Z.
$$
Extend $b_{j,k}$ to a $1$-periodic function over $\R$. Then substitution $t=\tau+\ell$ and the binomial identity lead to
\begin{eqnarray*}
 f(t)&=& \sum_{j=1}^s e^{\lambda_j(t-\tau)} \sum_{k=0}^{d_j} b_{j,k}(t) \, (t-\tau)^k \,  \\
&=&\sum_{j=1}^s e^{\lambda_j t} \sum_{k=0}^{d_j} \left(\sum_{m=0}^{d_j-k} c_{m,k} \, b_{j,k+m}(t)\, \tau^m \, e^{-\lambda_j \tau} \right) t^k, \quad c_m \in \R, \quad t \in \R.
\end{eqnarray*}
The functions  $\pi_{j,k}: [0,1] \rightarrow \C$, $\displaystyle \pi_{j,k}(\tau)=\sum_{m=0}^{d_j-k} c_{m,k} \, b_{j,k+m}(\tau)\, \tau^m \, e^{-\lambda_j \tau}$, can be extended to $\R$ to be $1$-periodic. By Lemma \ref{lem:everything_then_one} $(i)$,
due to $f \in H$, all the functions $\pi_{j,k}$ are analytic.
\end{proof}

\noindent Finally, in Theorem \ref{th:main_several_generators}, we observe that the intrinsic building blocks of $H$ depend on the invariant spaces of
the shift operators~$A_{d_j}$ with $d_j$, $j=1,\ldots,s$, in \eqref{def:H}. The operator $A_{d_j}$ acts on the space
$$
M_{d_j} \, = \,  \displaystyle \oplus_{k=0}^{d_j} \  {\mathcal P}_{k},
$$
i.e. on the direct sum  of the spaces ${\mathcal P}_{k}$ of algebraic polynomials of degrees at most $k$, by
$$
  A_{d_j}\, (p_0, \ldots, p_{d_j}) \, = \, \bigl(p_0(\cdot + 1), \ldots , p_{d_j}(\cdot + 1)\bigr)
$$
$M_{d_j}$ is a linear space of dimension $\displaystyle \sum_{k=0}^{d_j} (k+1)$ and $A_{d_j}$ is a
linear operator with the block-diagonal matrix representation
\begin{equation}\label{eq.200matr}
A_{d_j}\ = \ \left(
\begin{array}{cccc}
B_{0} & 0 & \cdots & 0\\
0 & B_{1} & \cdots & 0 \\
\vdots & \vdots & \ddots & \vdots \\
0 & \ldots & 0 & B_{d_j}
\end{array}
\right) \  \qquad \hbox{with} \qquad
B_{k}\ = \
\left(
\begin{array}{cccc}
1 & 0 & \cdots & 0\\
k &  1 & \cdots & 0 \\
\vdots & \vdots & \ddots & \vdots \\
1 & \ldots & 1 & 1
\end{array}
\right)
\end{equation}
of size $k+1$,  $k = 0, \ldots ,d_j$ ($B_k$ maps the vector of coefficients of $p_k$ to the coefficients of $p_k(\cdot+1)$).
More precisely, $B_k$ is a lower triangular matrix with ones on the main diagonal and the $\ell$-th column of $B_k$ contains (starting with the main diagonal element) the binomial
coefficients of the expansions of $(t+1)^{k+1-\ell}$, $\ell=1, \ldots,k+1$, respectively.

\begin{Theorem}\label{th:main_several_generators}
Let $H \subseteq V_\Phi$ be the space of all analytic functions
in a finitely generated shift invariant space $V_\Phi$.
Then there exist $s \in \N$, $\lambda_j \in \C$ (pairwise distinct modulo $2\pi i$),  $d_j \in \N_0$,
$1$-periodic analytic functions $\pi_{j,k}:\R \rightarrow \C $, and
subspaces $N_j \subset M_{d_j},$ $j=1, \ldots,s$, each $N_j$ is an invariant subspace of the block-diagonal matrix~$A_{d_j}$ in~(\ref{eq.200matr}) such that
$H$  is a linear span of  spaces~$L_1, \ldots , L_s$ with
\begin{equation}\label{eq.200shift-inv}
L_j \quad = \quad \left\{\,
e^{\lambda_j t}\, \sum_{k=0}^{d_j} \, p_{j, k} \, \pi_{j, k}(t)\ \Bigl| \
\  p_{j, k}(t) \in \mathcal{P}_{k} \ , \
  \bigl(p_{j, 0}, \ldots , p_{j, d_j}  \bigr) \, \in \, N_j \ \right\},
\end{equation}
Moreover, every subspace of $H$ has the same form~(\ref{eq.200shift-inv})
with the same $\lambda_j$ and $\pi_{j, k}$ but with some subspaces
$N_j' \subset N_j, \ j = 1, \ldots , s$.
\end{Theorem}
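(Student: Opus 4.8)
The plan is to reduce the statement, via the spectral decomposition of the shift operator, to a module-theoretic normal form on each spectral piece. First I would record that $H$ is finite-dimensional (Theorem~\ref{th:structure_H}) and invariant under $S\colon f\mapsto f(\cdot+1)$ and under $S^{-1}$, so $S|_H$ is an invertible operator. By Lemma~\ref{lem:aux1}$(i)$ and the representation of Theorem~\ref{th:structure_H}, the (commuting) operators satisfy $\prod_j\nabla_{\lambda_j}^{d_j+1}=\prod_j(e^{-\lambda_j}S-I)^{d_j+1}=0$ on $H$; since the $\lambda_j$ are pairwise distinct modulo $2\pi i$ the scalars $e^{\lambda_j}$ are distinct and the factors $(e^{-\lambda_j}z-1)^{d_j+1}$ pairwise coprime, so the primary decomposition yields $H=\bigoplus_j H_j$ with $H_j:=\ker(e^{-\lambda_j}S-I)^{d_j+1}|_H$; discard the zero summands. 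Using Lemma~\ref{lem:aux1}$(ii)$ and the linear independence of building blocks with distinct exponential factors, each $f\in H_j$ must have the form $f=e^{\lambda_j t}Q$ with $Q$ analytic of the shape $\sum_{k\le d_j}\rho_k(t)t^k$, the $\rho_k$ being $1$-periodic and, by Lemma~\ref{lem:everything_then_one}$(i)$, analytic. Hence $W_j:=e^{-\lambda_j t}H_j$ is a finite-dimensional space of analytic functions, invariant under the unit shift $\sigma$ and $\sigma^{-1}$, with $(\sigma-I)^{d_j+1}W_j=\{0\}$; it suffices to show that each $W_j$ equals $\{\sum_{k}p_k\pi_{j,k}:(p_0,\dots,p_{d_j})\in N_j\}$ for suitable $1$-periodic analytic $\pi_{j,k}$ and an $A_{d_j}$-invariant subspace $N_j\subset M_{d_j}$.

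Next I would put $\sigma|_{W_j}$ into a normal form adapted to periodic coefficients. Filter $W_j$ by $F_kW_j:=\{Q\in W_j:\operatorname{deg}_tQ\le k\}$, where $\operatorname{deg}_t$ records the highest power of $t$; since $\sigma-I$ lowers $\operatorname{deg}_t$ by exactly one on elements of positive $t$-degree and annihilates the $1$-periodic ones, the leading-coefficient map identifies $F_kW_j/F_{k-1}W_j$ with a space $R_k$ of $1$-periodic analytic functions and $R_{d_j}\subseteq\cdots\subseteq R_1\subseteq R_0=\ker(\sigma-I)|_{W_j}$. Choose a basis $\eta_1,\dots,\eta_N$ of $R_0$ adapted to this flag, with heights $m_1\ge\cdots\ge m_N$ ($m_a$ the largest $k$ with $\eta_a\in R_k$); for each $a$ pick $Q_a\in W_j$ with $\operatorname{deg}_tQ_a=m_a$ and leading coefficient $\eta_a$, and set $Q_a^{(k)}:=\bigl((k+1)\cdots m_a\bigr)^{-1}(\sigma-I)^{m_a-k}Q_a$. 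A direct computation gives $(\sigma-I)Q_a^{(k)}=k\,Q_a^{(k-1)}$, $\operatorname{deg}_tQ_a^{(k)}=k$ and leading coefficient $\eta_a$ for all $0\le k\le m_a$; a leading-coefficient argument shows the $\sum_a(m_a+1)=\dim W_j$ functions $Q_a^{(k)}$ are independent, hence a basis, so $W_j=\bigoplus_a W_a$ with each $W_a:=\operatorname{span}\{Q_a^{(k)}:0\le k\le m_a\}$ a single $\sigma$-Jordan block whose leading $1$-periodic coefficient is the \emph{same} $\eta_a$ along the entire string.

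Finally I would realize this picture inside $M_{d_j}$. Regard $W_j$ and $M_{d_j}$ as modules over $\C[x]$ with $x$ acting as $\sigma-I$, respectively as $A_{d_j}-I$; any tuple $(\pi_{j,0},\dots,\pi_{j,d_j})$ of $1$-periodic analytic functions defines a $\C[x]$-module homomorphism $\Psi\colon M_{d_j}\to\{\text{analytic functions}\}$ sending $p$ in the summand $\mathcal{P}_k$ to $p\,\pi_{j,k}$, and $\Psi$ intertwines $A_{d_j}$ with $\sigma$ precisely because the $\pi_{j,k}$ are $1$-periodic. Collecting the finitely many $1$-periodic analytic functions occurring as coefficients of the $Q_a$, taking a basis $\xi_1,\dots,\xi_M$ of their span and---after enlarging $d_j$ if necessary---assigning the $\xi_c$ to distinct summands of $M_{d_j}$ of sufficiently high index (and the zero function to the remaining summands), I would write each $Q_a$ as $\Psi(v_a)$ for an explicit $v_a\in M_{d_j}$ all of whose coordinate polynomials have degree $\le m_a$. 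Then $\operatorname{ann}_{\C[x]}(v_a)=(x^{m_a+1})$, so $\C[x]v_a\cong\C[x]/(x^{m_a+1})$ and $\Psi$ maps it isomorphically onto $W_a$; hence $N_j:=\sum_a\C[x]v_a$ is an $A_{d_j}$-invariant subspace with $\dim N_j\le\sum_a(m_a+1)=\dim W_j$, and $\Psi|_{N_j}\colon N_j\to W_j$ is a module isomorphism. Therefore $H_j=e^{\lambda_j t}\Psi(N_j)=L_j$ as in~(\ref{eq.200shift-inv}) and $H=\bigoplus_j L_j$. For the last assertion, a shift-invariant subspace $H'\subseteq H$ splits as $H'=\bigoplus_j(H'\cap H_j)$; each $e^{-\lambda_j t}(H'\cap H_j)$ is a $\sigma$-invariant subspace of $W_j$, whose preimage under the isomorphism $\Psi|_{N_j}$ is an $A_{d_j}$-invariant $N_j'\subseteq N_j$ with $e^{\lambda_j t}\Psi(N_j')=H'\cap H_j$, giving $H'$ the form~(\ref{eq.200shift-inv}) with the same $\lambda_j$, $\pi_{j,k}$ and with $N_j'\subseteq N_j$.

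I expect the third step to be the main obstacle. A single $\sigma$-Jordan string $W_a$ of $W_j$ need \emph{not} be of the form $\pi\cdot\mathcal{P}_{m_a}$---already $\operatorname{span}\{\pi_a,\,\pi_a t+\pi_b\}$, with $\pi_a,\pi_b$ linearly independent $1$-periodic functions, is a $\sigma$-invariant two-dimensional space that is not $\pi\cdot\mathcal{P}_1$ for any $\pi$---so $W_a$ cannot be accommodated inside a single summand $\mathcal{P}_k$ of $M_{d_j}$; it has to be spread over several summands, and one must verify that the pieces for all $a$ assemble into one $A_{d_j}$-invariant subspace $N_j$ while the map remains of the special coordinatewise form $p\mapsto p\,\pi_{j,k}$. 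This is precisely why the index $d_j$ produced by Theorem~\ref{th:structure_H} may need to be enlarged, and the bookkeeping of which $1$-periodic function is placed in which summand of $M_{d_j}$ is the technical core of the argument.
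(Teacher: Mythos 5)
Your proposal is correct, and it takes a genuinely more structural route than the paper's own proof. The paper argues in a few lines: it invokes Lemma~\ref{lem:everything_then_one}$(ii)$ to extract, from a given $f\in H$, the components $g_j=e^{\lambda_j t}\sum_k p_{j,k}\pi_{j,k}$, observes that the unit shift fixes the $\pi_{j,k}$ and acts on the coordinate tuple $(p_{j,0},\ldots,p_{j,d_j})$ as $e^{\lambda_j}A_{d_j}$, and takes $N_j$ to be the minimal $A_{d_j}$-invariant subspace containing that tuple. You share the central mechanism --- your intertwining $\Psi\circ A_{d_j}=\sigma\circ\Psi$, valid precisely because the $\pi_{j,k}$ are $1$-periodic, is exactly this observation --- but you precede it with the primary decomposition of the shift on the finite-dimensional space $H$ and a Jordan-string normal form for $\sigma-I$ on each $W_j=e^{-\lambda_j t}H_j$, and only then embed the result into $M_{d_j}$. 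That extra work buys something real: the paper's construction yields $\pi_{j,k}$'s that a priori depend on the chosen $f$ and never explains why a single tuple serves all of $H$, and it silently reuses the $d_j$ coming from Theorem~\ref{th:structure_H}, which can be too small --- for instance if $H_j=e^{\lambda_j t}\cdot\operatorname{span}\{\xi_1,\ldots,\xi_M\}$ with $M\ge 2$ linearly independent $1$-periodic functions (attainable with several generators as in Example~\ref{ex:periodic_functions_in_V}), then $d_j=0$, $\dim M_{0}=1$, and no invariant $N_j\subseteq M_{0}$ can produce an $M$-dimensional $L_j$. Your step of distributing a basis of the occurring periodic coefficients over distinct summands of an enlarged $M_{d_j}$, together with the verification that $\operatorname{ann}_{\C[x]}(v_a)=(x^{m_a+1})$ so that $\Psi|_{N_j}$ is an isomorphism onto $W_j$, is exactly what closes these gaps, and your treatment of the final clause via $H'=\oplus_j(H'\cap H_j)$ and pullback along $\Psi|_{N_j}$ is cleaner than the paper's one-line appeal to invertibility. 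The only caveats are minor: the last assertion must be read as concerning shift-invariant subspaces of $H$ (which you correctly assume), and the uniformity over $f\in H$ of the finite data $s$, $\lambda_j$, $d_j$ used to get $\prod_j\nabla_{\lambda_j}^{d_j+1}=0$ on $H$ deserves one sentence (it follows from $\dim H<\infty$ and the linear independence of blocks with distinct $\lambda$ modulo $2\pi i$).
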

\begin{proof} For  $f$ in $H$, by Lemma \ref{lem:everything_then_one} $(ii)$, we
get that
$$
g(t)=g_j(t)=e^{\lambda_j t}\, \displaystyle \sum_{k=0}^{d_j} p_{j,k}(t) \pi_{j,k}(t) \in H, \quad j=1,\ldots,s,
$$
with
$ \operatorname{deg}(p_{j,k})=k$, $k=0, \ldots, d_j$.
Note that the operators $e^{\lambda_j-\mu}A_{d_j}-I$, $\mu \in \{\lambda_\ell \ : \ \ell=1,\ldots,s\} \setminus \{\lambda_j\}$, are non degenerate (invertible)
on $M_{d_j}$, $j=1, \ldots,s$ and describe the transformation $f \mapsto g_j$ in Lemma \ref{lem:everything_then_one} $(ii)$.
Moreover, we observe that the shift operator $g(\cdot ) \mapsto g(\cdot +1)$ leaves
the functions $\pi_{j,k}$ unchanged and maps the vector
$\bigl(p_{j,0}, \ldots , p_{j,d_j} \bigr) \in M_{d_j}$ to the vector
$e^{\lambda_j}\, A_{d_j} \, \bigl(p_{j,0}, \ldots , p_{j,d_j} \bigr) \in M_{d_j}$.
Since $H$ is shift-invariant, it contains the
linear span of all integer shifts of $g$. Hence, it contains
all functions $e^{\lambda_j t}\, \displaystyle \sum_{k=0}^{d_j} \tilde p_{j,k} \pi_{j,k}$ with $\bigl(\tilde{p}_{j,0}, \ldots , \tilde{p}_{j,d_j} \bigr)$ from the minimal invariant subspace of the operator
 $A_{d_j}$ that contains the vector $\bigl({p}_{j,0}, \ldots ,{p}_{j,d_j} \bigr)$. The invertibility of $A_{d_j}$ and  $e^{\lambda_j-\mu}A_{d_j}-I$, $\lambda_j \not=\mu$, on $M_{d_j}$
$j=1, \ldots,s$,  completes the proof.
\end{proof}

\begin{Remark}\label{r.200}
{\em Theorem~\ref{th:main_several_generators}  classifies all possible spaces of analytic functions
in finitely generated shift-invariant spaces $V_\Phi$. One takes a finite set
of complex numbers $\lambda_1, \ldots, \lambda_s$ and non-negative integers
$d_j$, $j=1, \ldots,s$. Then, for each $j = 1, \ldots ,s$,
one chooses an arbitrary invariant subspace~$N_j$ of the corresponding block-diagonal matrices $A_{d_j}$. This defines the functional space~(\ref{eq.200shift-inv}).
The direct sum of those $s$ spaces is the space of all analytic functions
in a shift-invariant space $V_\Phi$. It is interesting to note that the matrices
$A_{d_j}$ defined in~(\ref{eq.200matr}) may have a very rich variety of
invariant spaces. It would be interesting to obtain the description of such invariant spaces
at least for $A_{d_j}$ with small number of diagonal-blocks. In the example below we consider the simplest case of two diagonal blocks of sizes 1 and 2 and show that already in this case there are four possible spaces $N_1$}.
\end{Remark}

\noindent The following example shows that the structure of the invariant subspaces of the matrices $A_{d_j}$ in
\eqref{eq.200matr} is highly nontrivial.

\begin{Example} By Theorem~\ref{th:main_several_generators}, every subspace $H$ of analytic functions
generated by the integer shifts of a finite set of compactly supported functions
is a direct sum of spaces $L_j$ of the form \eqref{eq.200shift-inv}.
Consider the simplest case $s=1$, i.e., $H = L_1$, $d_1 = 3$ and the
$3\times 3$ matrix $M_3$ has two blocks, of sizes one and two.
For the sake of simplicity in what follows we denote $\lambda_1 = \lambda,
M_3 = M$ and $N_1 = N$.
Let $\lambda \in \C$, $(a,b,c) \in \R^3$ and
$\pi_{1,0}$, $\pi_{2,2} : \R \rightarrow \C$ be $1$-periodic analytic
functions.
We classify all invariant subspaces $N \subseteq M={\cal P}_0 \oplus {\cal P}_1$ of the linear operator $A: M \rightarrow M$
which, by \eqref{eq.200matr},  has the matrix representation
$$
A= \left(
\begin{array}{cc}
B_0 & 0 \\ 0 &B_1
\end{array}
\right)
\quad \hbox{with} \quad  B_0=1
\quad \hbox{and} \quad B_1= \
\left(
\begin{array}{cc}
1 & 0 \\
1 & 1 \\
\end{array}
\right).
$$
By Theorem~\ref{th:main_several_generators} there is a one-to-one correspondence between these invariant subspaces~$N$ of $A$
and the subspaces $L_1 = L$ in \eqref{eq.200shift-inv}.

\smallskip
\noindent $1).$ The first subspace of the matrix $A$ is
$ N^{(1)} \ = \ \Bigl\{ (0,b, c)  \ \Bigl| \ (b, c) \in \R^2 \Bigr\}$
and the corresponding subspace of the analytic functions is given by
$$
L \ = \ L^{(1)} \ = \ \bigl\{ \, e^{\lambda t} \, \pi_{j,1}(t)\,(bt +c) \ \Bigl| \  (b, c) \in \R^2\, \Bigr\}\, .
$$

\noindent $2).$ The second invariant subspace of the matrix $A$ is
$N^{(2)} \ = \ \Bigl\{ (a, 0, c) \ \Bigl| \  (a, c) \in \R^2 \Bigr\}$ with
$$
L \ = \ L^{(2)}\ = \ \bigl\{ \, e^{\lambda t}  \, \left( \,  a\, \pi_{j,0}(t)  + \, c\,\pi_{j,1}(t)\, \right) \ \Bigl| \
 (a, c) \in \R^2\, \Bigr\}\, .
$$

\noindent $3).$ Moreover, for every vector $(a_0, c_0) \in \R^2\setminus \{0\}$, the matrix $A$ has the following
one-dimensional invariant subspace
$
N^{\, (a_0, c_0)} \ = \ \Bigl\{ \, \tau \, (a_0, 0, c_0)\ \Bigl| \ \tau
 \in \R \Bigr\}\, $ with
$$
L \ = \ L^{\, (a_0, c_0)} \ = \ \Bigl\{ \, e^{\lambda\, t}  \, \left( \,  \tau \, a_0 \, \pi_{j,0}(t)\, + \, \tau \,  c_0 \,
\pi_{j,1}(t)\, \right) \ \Bigl| \
\tau \in \R  \, \Bigr\}\, .
$$
\noindent Surprisingly, these are not all invariant subspaces of~$A$. There is one more family of invariant subspaces.
\smallskip

\noindent $4).$ For every $(u, v) \in \R^2\setminus \{0\}$, the matrix $A$
has a two-dimensional invariant subspace
$$
N^{\, (u, v)} \ = \ \Bigl\{ \,  (a, b, c) \in \R^3\ \Bigl| \
ua + vb = 0\, \Bigr\}\,
$$
with
$$
L \ = \ L^{(u, v)} \ = \ \Bigl\{ \, e^{\lambda t} \, \left(a \,\pi_{j,0}(t) + \pi_{j,1}(t)(bt+c) \right)  \ \Bigl| \
 (a, b, c) \in \R^3, \  ua + vb = 0 \, \Bigr\}\, .
$$
Thus, there are four possible choices for the corresponding space $L_1$.

\noindent Note that even in this simple example the classification of invariant subspaces
of the matrix $A$ is nontrivial.
\end{Example}

\section{Analytic functions in single generated shift-invariant spaces} \label{sec:analytic_noref_1}

\noindent In this section, we characterize the structure of the
subspace $H$ of analytic functions in a singly generated shift-invariant space
$$
 V_{\phi}=\hbox{span} \{\phi(\cdot-\ell) \ : \ \ell \in \Z\}.
$$
This characterization, stated in Theorem \ref{th:structure_H_n1}, relates the structure of $H$ to the
exponential decay of the sequences derived from the Fourier transform  (analytically extended to $\C$)
$$
 \widehat{\phi}(y)=\int_{\R} \phi(x) e^{-2\pi i x \cdot y} dx, \quad y \in \R.
$$
of a compactly supported distribution $\phi$.

\noindent Due to Theorem \ref{th:main_several_generators}, we consider analytic functions
$g(t) = e^{\lambda t} \displaystyle \sum_{k=0}^{d} p_{k}(t)\, \omega_{k}(t)$, $t \in \R$, in $H$ with
polynomials $p_k$ satisfying ${\rm deg}\, p_{k}\, = \, k, \ k = 0, \ldots , d$, $d \in \N_0$.  If $p_j\equiv0$, $j \in \{0,\ldots,d\}$, we set the corresponding $1$-periodic analytic function $\omega_j$ to be identically zero on $\R$. We assume that $\omega_d(t) \ne 0$.

\begin{Theorem} \label{th:structure_H_n1} Let $\phi$ be the generator of the shift-invariant
space $V_\phi$.

\noindent $(i)$ If for $\lambda \in \C$ and $d \in \N_0$, the space $V_{\phi}$
contains $g(t)= e^{\lambda t} \displaystyle \sum_{k=0}^d p_k(t) \omega_k(t)$,
where the functions $\omega_k$ are $1$-periodic analytic, $\omega_d \ne 0$ and the polynomials $p_k$  are of degree $k$, then the sequences
\begin{equation} \label{eq.200decay}
 \left\{ \widehat{\phi}^{(k)}\left(-\frac{i\lambda}{2\pi}+\ell\right) \ : \ \ell \in \Z\right\}, \quad
 k=0,\ldots,d
\end{equation}
decay exponentially as $|\ell|$ goes to infinity.

\noindent $(ii)$ Conversely, if the
sequences in~\eqref{eq.200decay} decay exponentially
for some $\lambda \in \C$ and $d \in \N_0$, then the space $V_{\phi}$ contains the
$(d+1)$-dimensional subspace of analytic functions
\begin{equation}\label{eq.200h}
 H_{\lambda} \ = \ \left\{ \  \sum_{\ell \in \Z} e^{\lambda \, \ell} p(\ell) \, \phi (\cdot-\ell)\ : \
 p \in {\mathcal P}_d\ \right\}
\end{equation}
spanned by
\begin{equation}\label{eq.200basis}
\sum_{\ell \in \Z}   e^{\lambda \, \ell}\, \ell^k \, \phi(t-\ell) =  e^{\lambda t}
\sum_{j=0}^k \, {k \choose j} \,  t^{k-j} \, (-1)^{j} \,  \omega_j(t), \quad t \in \R, \quad  k = 0, \ldots, d,
\end{equation}
where $\omega_k$ are $1$-periodic analytic functions given by
\begin{equation}\label{eq.200pis}
\omega_k(t)\quad= - \left(\frac{-1}{2\pi i} \right)^k \, \sum_{\ell \in \Z} \widehat{\phi}^{\, (k)}
\left( - \frac{i\, \lambda}{2\pi} + \ell\right)\,  e^{2\pi i \ell t}\quad
 k = 0, \ldots, d, \quad t \in \R.
\end{equation}
\end{Theorem}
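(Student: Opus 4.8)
The two parts are dual to each other, and both rest on one computation: expanding a lattice sum $\sum_{\ell\in\Z} e^{\lambda\ell}\ell^k\,\phi(t-\ell)$ by means of the Poisson summation formula (applied to the analytically extended Fourier transform $\widehat\phi$, which is an entire function of exponential type since $\phi$ is compactly supported). Concretely, for a test sequence $c(\ell)=e^{\lambda\ell}\ell^k$ one writes $\sum_\ell c(\ell)\phi(t-\ell)$ and uses that the symbol $\sum_\ell c(\ell)e^{-2\pi i\ell y}$ is, up to the factor $e^{\lambda t}$ pulled out, a derivative of a Dirac comb shifted to the point $y=-i\lambda/(2\pi)$; dualizing gives precisely the right-hand side of~\eqref{eq.200basis}, with the $1$-periodic coefficient functions~$\omega_k$ being the Fourier series in~\eqref{eq.200pis}. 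So the plan for part~$(ii)$ is: first verify that the stated exponential decay of the sequences $\{\widehat\phi^{(k)}(-i\lambda/(2\pi)+\ell)\}_\ell$, $k=0,\dots,d$, makes each Fourier series in~\eqref{eq.200pis} converge to a genuine $1$-periodic \emph{analytic} function (exponential decay of Fourier coefficients is exactly analyticity on a strip, hence on $\R$); second, justify that the lattice sum in~\eqref{eq.200h} converges in $V_\phi$ and, via Poisson summation, equals the closed form in~\eqref{eq.200basis}; third, note that as $p$ runs over $\mathcal P_d$ these are $d+1$ functions spanning $H_\lambda$, and they are analytic because each is $e^{\lambda t}$ times a polynomial combination of the analytic $\omega_k$. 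Linear independence of the $d+1$ basis elements follows from the triangular structure in~\eqref{eq.200basis} (the $k$-th function has top term $e^{\lambda t}t^k\omega_0(t)$) together with $\omega_d\ne 0$, which in turn is guaranteed by the decay hypothesis not being vacuous at the top index; one should check the hypothesis indeed forces $\omega_d\not\equiv 0$, or else state $H_\lambda$ has dimension $d+1$ under the convention that the top sequence is not identically zero.

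For part~$(i)$ I would argue in the reverse direction. Assume $g(t)=e^{\lambda t}\sum_{k=0}^d p_k(t)\omega_k(t)\in V_\phi$ with $\deg p_k=k$ and $\omega_d\not\equiv0$. Since $g\in V_\phi$, write $g=\sum_{\ell}c(\ell)\phi(\cdot-\ell)$ for some sequence $c$; the key point is to recover the coefficient sequence from $g$ and show it must be of the form $e^{\lambda\ell}q(\ell)$ for a polynomial $q$ of degree exactly $d$. This is where the structure from Section~\ref{sec:analytic_spaces_phi} enters: by Theorem~\ref{th:main_several_generators} the analytic piece of $V_\phi$ built on the exponent $\lambda$ is governed by an invariant subspace of $A_d$, and the presence of a polynomial factor of full degree $d$ forces the corresponding coefficient sequence to contain the full ``$\ell^k e^{\lambda\ell}$'' tower up to $k=d$ (this is the linear-difference-equation argument already used in the proof of Theorem~\ref{th:structure_H}, run in the single-generator setting). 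Once the coefficient sequence is identified, apply Poisson summation the other way: the Fourier transform of $g$ is a distribution supported on the lattice $-i\lambda/(2\pi)+\Z$, and evaluating it against $\widehat\phi$ shows that the Fourier coefficients of the $1$-periodic analytic functions $\omega_k$ (which, being analytic, must have exponentially decaying Fourier coefficients) are, up to normalization, exactly the numbers $\widehat\phi^{(k)}(-i\lambda/(2\pi)+\ell)$. Hence those sequences decay exponentially, as claimed. Alternatively, and perhaps more cleanly: from $g\in V_\phi$ and the explicit form of $g$, derive that $\omega_d$ (the top coefficient) has Fourier coefficients proportional to $\widehat\phi^{(d)}(-i\lambda/(2\pi)+\ell)$; since $\omega_d$ is analytic and $1$-periodic, its Fourier coefficients decay exponentially, giving the $k=d$ case, and then induct downward on $k$ using that $\omega_{d-1},\dots,\omega_0$ are likewise analytic.

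The main obstacle, I expect, is \emph{the convergence and interchange-of-summation bookkeeping in the Poisson summation step}, made delicate by the exponential weight $e^{\lambda\ell}$: when $\operatorname{Re}\lambda\ne0$ the sequence $e^{\lambda\ell}\ell^k$ is not tempered on $\Z$, so one cannot apply Poisson summation to a tempered distribution naively. The way around this is precisely the compact support of $\phi$: the sum $\sum_\ell c(\ell)\phi(t-\ell)$ is \emph{locally finite}, so on any bounded interval it is a finite sum and convergence is a non-issue; the identity~\eqref{eq.200basis} can then be obtained either by a direct termwise manipulation on a bounded interval (shifting the index and collecting the $e^{2\pi i\ell t}$ into the Fourier series of $\omega_k$), or by analytically continuing an identity first established for $\operatorname{Re}\lambda=0$ where everything is tempered — both exponent $\lambda$ and the truncated-support Fourier transform extend holomorphically. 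A secondary technical point is confirming that ``$\{\widehat\phi^{(k)}(-i\lambda/(2\pi)+\ell)\}_\ell$ decays exponentially'' is genuinely equivalent to ``the Fourier series~\eqref{eq.200pis} defines a function analytic on all of $\R$'': this is the classical Paley–Wiener–type fact that a $1$-periodic function is real-analytic iff its Fourier coefficients are $O(e^{-c|\ell|})$ for some $c>0$, which I would invoke (with a one-line justification) rather than reprove.
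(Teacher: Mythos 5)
Your proposal follows essentially the same route as the paper: part $(ii)$ is the Poisson-summation computation identifying $\sum_{\ell}e^{\lambda\ell}\ell^k\phi(\cdot-\ell)$ with the right-hand side of \eqref{eq.200basis} via the substitution $\psi=e^{-\lambda\cdot}\phi$ and the Paley--Wiener equivalence between exponential decay of Fourier coefficients and analyticity of the $1$-periodic functions \eqref{eq.200pis}; part $(i)$ reduces to showing the coefficient sequence of $g$ in the $\psi$-basis is a polynomial of degree $d$ and then reading off the decay of \eqref{eq.200decay} from analyticity. Your observation that local finiteness of the lattice sum (compact support of $\phi$) disposes of the convergence issues for non-tempered weights $e^{\lambda\ell}$ is exactly the right remark.

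One step in your part $(i)$ is underspecified and is where the actual work lies. The paper does not invoke Theorem~\ref{th:main_several_generators} here; it runs an induction on $d$ in which part $(ii)$ at degree $d-1$ is used \emph{inside} the proof of part $(i)$ at degree $d$: one forms $g(\cdot+1)-g(\cdot)$, which lowers every $\deg p_k$ by one, applies the inductive hypothesis to get decay for $k\le d-1$, then uses the $d$-dimensionality of $H_\lambda$ from part $(ii)$ to conclude $a_{\ell+1}-a_\ell=p(\ell)$ with $\deg p\le d-1$, and solves this difference equation to get $a_\ell=\tilde p(\ell)$ with $\deg\tilde p=d$. This identification, and indeed the very recovery of the coefficient sequence from $g$, requires the linear independence of the integer shifts of $\phi$ --- a standing assumption of this section that you never invoke; without it the step "$\sum_\ell(a_{\ell+1}-a_\ell)\psi(\cdot-\ell)=\sum_\ell p(\ell)\psi(\cdot-\ell)$ implies $a_{\ell+1}-a_\ell=p(\ell)$" fails. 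Your "cleaner alternative" (reading the Fourier coefficients of $\omega_d$ directly) also presupposes this identification, since the $\omega_k$ given in the hypothesis of $(i)$ are not a priori the canonical ones of \eqref{eq.200pis}; only after the coefficient sequence is known to be polynomial does the triangular relation \eqref{eq.200basis} tie them together.
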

\begin{proof} Let $\psi(t)=e^{-\lambda t} \phi(t)$, $t \in \R$.

\noindent We first prove $(ii)$.
Assume that the sequences in~\eqref{eq.200decay} decay exponentially. Then, by Payley-Wiener theorem, the $1$-periodic functions in~\eqref{eq.200pis}
are analytic and, by the Poisson summation formula, we have
$$
\omega_k(t)= - \left(\frac{-1}{2\pi i} \right)^k \sum_{\ell \in \Z} \widehat{\phi}^{\, (k)}
\left( - \frac{i\, \lambda}{2\pi} + \ell\right)\,  e^{2\pi i \ell t}= - \left(\frac{-1}{2\pi i} \right)^k\,
\sum_{\ell \in \Z} \, \widehat{\psi}^{(k)}(\ell) \,  e^{2\pi i \ell t}=
\sum_{\ell \in \Z} (t-\ell)^k \, \psi(t-\ell)
$$
for $k = 0, \ldots , d$ and $t \in \R$.
Hence, for $k=0, \ldots,d$, the functions
\begin{eqnarray*}
\sum_{\ell \in \Z} \ell^k\, \psi(t-\ell) &=&
 \sum_{\ell\in \Z} \Bigl(\, t - (t-\ell)\, \Bigr)^k\, \psi(t-\ell) \quad = \quad
\sum_{\ell\in \Z} \sum_{j=0}^k  {k \choose j} \, t^{k-j} \, (-1)^{j} \, (t-\ell)^{j}\, \psi(t-\ell) \\
&=&   \sum_{j=0}^k \,  {k \choose j } \, t^{k-j} \, (-1)^{j} \,
\sum_{\ell\in \Z}  \, (t-\ell)^{j}\, \psi(t-\ell)\ = \
\sum_{j=0}^k \, {k \choose j} \, t^{k-j} \, (-1)^{j} \,   \omega_j(t), \quad t \in \R,
\end{eqnarray*}
are analytic and belong to $H_\lambda \subseteq V_\phi$.

\noindent  The proof of $(i)$ is by induction on $d$. In the case $d=0$, the polynomial $p_0$ is constant, w.l.g $p_0(t)\equiv 1$. Then $g(t)=e^{\lambda t} \omega(t) \in V_\phi$ if and only if $\omega \in V_\psi$, i.e.
\begin{equation} \label{aux1}
 \omega(t)=\sum_{\ell \in \Z} a_\ell \, \psi(t-\ell), \quad a_\ell, t \in \R.
\end{equation}
The periodicity of $\omega$ implies that
$$
  \sum_{\ell \in \Z} a_\ell \, \psi(t+1-\ell)=\sum_{\ell \in \Z} a_\ell \, \psi(t-\ell), \quad t \in \R,
$$
which is equivalent to the identity
$$
 \sum_{\ell \in \Z} (a_{\ell+1}-a_\ell) \, \psi(t-\ell)=0, \quad t \in \R.
$$
Due to the linear independence of the integer shifts of $\psi$, we obtain $a_{\ell+1}-a_\ell=0$ for all $\ell \in \Z$.  Or, equivalently, w.l.g. $a_\ell=1$ for all $\ell \in \Z$. Therefore, by the Poisson summation formula, we obtain
$$
 \omega(t)=\sum_{\ell \in \Z} \psi(t-\ell)=-\sum_{\ell \in \Z} \widehat{\psi}(\ell) \, e^{2\pi i \ell t}=-\sum_{\ell \in \Z} \widehat{\phi}\left(-\frac{i\lambda}{2\pi}+\ell\right) e^{2\pi i \ell t}, \quad
t \in \R.
$$
Due to the analyticity of $\omega$, the above identity holds if and only if there exists a constant $C>0$ and $q \in (0,1)$ such that
$$
  \left| \, \hat{\phi} \left(-\frac{i\lambda}{2\pi}+\ell\right) \, \right| \le \,C \,q^{|\ell|} \quad \hbox{for all} \quad \ell \in \Z.
$$

\noindent We assume that the hypothesis is true for $d-1$. Then
$g= \displaystyle \sum_{k=0}^{d} p_k\, \omega_k$, $\omega_{d} \ne 0$ belongs to $V_\psi$ if and only if
$g = \displaystyle \sum_{\ell \in \Z} a_\ell \psi(\cdot-\ell)$. Thus, by  the periodicity of
$\omega_k$, $k=0, \ldots,d$, we get
$$
g(t+1)-g(t)=\sum_{\ell \in \Z} \left( a_{\ell+1} - a_\ell\right) \, \psi (t - \ell) \ = \
\sum_{k=0}^{d} \Big( p_k(t-1) - p_k(t) \Big)  \, \omega_k(t), \quad t\in \R.
$$
Define $q_k:=p_k(\cdot-1) - p_k$, $k=1, \ldots,d$. Due to  $p_0(t)\equiv 1$ and $\operatorname{deg}(q_k)=k-1$ for
$k=1, \ldots,d$, the function
$$
\tilde{g}(t):=\sum_{\ell \in \Z} \left( a_{\ell+1} - a_\ell\right) \, \psi (t - \ell) \ = \
\sum_{k=0}^{d-1} q_{k+1}(t) \, \omega_{k+1}(t), \quad t\in \R,
$$
satisfies the inductive assumption. Therefore, the sequences in \eqref{eq.200decay}
for $k=0, \ldots,d-1$, decay exponentially.
Secondly, by $(ii)$, the structure of the $d$-dimensional $H_\lambda$ and the analyticity of $\tilde{g}$,
imply that
$$
 \sum_{\ell \in \Z} \bigl( a_{\ell+1}  -  a_\ell\bigr) \, \psi (\cdot - \ell)\, = \,
 \sum_{\ell \in \Z} p(\ell) \, \psi (\cdot - \ell), \quad \hbox{for some} \quad p \in {\cal P}_{d-1}.
$$
The linear independence of the integer shifts of $\psi$ implies that $a_{\ell+1}-a_\ell= p(\ell)$
for all $\ell \in \Z$. Theory of difference equations ensures that every solution of this
difference equation is given by $a_\ell = \tilde p(\ell)$, $\ell \in \Z$, for some polynomial $\tilde p$
of degree $d$. We write $\tilde p(\ell) = \alpha \, \ell^{d} + q(\ell)$, $\alpha \in \R \setminus \{0\}$ and  ${\rm deg } \, (q )\le d-1$, and have
$$
g(t) \ = \ \sum_{\ell \in \Z} \Big(\alpha \ell^{d} + q(\ell)\Big) \, \psi(t-\ell) \ = \
 \sum_{k=0}^{d} p_k(t) \, \omega_k(t), \quad t \in \R,
$$
and, hence, the function
\begin{equation} \label{eq:representation_of_poly}
\alpha \sum_{\ell \in \Z} \ell^{d}\, \psi(t-\ell) \quad  =  \quad
- \, \sum_{\ell \in \Z} q(\ell)\, \psi(t-\ell)    \ + \
 \sum_{k=0}^{d} p_k(t) \, \omega_k(t), \quad t \in \R,
\end{equation}
is analytic due to the analyticity of $g$ and, by the inductive assumption, analyticity of
$\displaystyle \sum_{\ell \in \Z} q(\ell)\, \psi(\cdot-\ell)$. Consequently and due to the
inductive assumption,
the function
$$
 \sum_{\ell \in \Z} (t-\ell)^{d} \, \psi(t-\ell)= (-1)^d \, \sum_{\ell \in \Z} \ell^d \, \psi(t-\ell)+
\sum_{k=1}^d \, (-1)^{d-k}\,t^k \, \sum_{\ell \in \Z} \, \ell^{d-k} \, \psi(t-\ell), \quad t \in \R,
$$
is analytic as well.  Therefore, by the Poisson summation formula
$$
 \sum_{\ell \in \Z} \, (t-\ell)^{d} \, \psi(t-\ell)= - \left(\frac{-1}{2\pi i} \right)^d \, \sum_{\ell \in \Z} \widehat{\psi}^{(d)}(\ell) \, e^{2\pi i \ell t}= - \left(\frac{-1}{2\pi i} \right)^d \, \sum_{\ell \in \Z} \widehat{\phi}^{(d)}
\left( -\frac{i\lambda}{2\pi}+\ell\right) \, e^{2\pi i \ell t}, \quad t \in \R,
$$
and the analyticity implies that the sequence $ \left\{ \widehat{\phi}^{(d)}\left(-\frac{i\lambda}{2\pi}+\ell\right) \ : \ \ell \in \Z\right\}$ decays exponentially. Thus, we have shown that \eqref{eq.200decay} is satisfied for $k=0, \ldots, d$.
\end{proof}

\begin{Corollary}\label{c.200}
The set of analytic functions spanned by the shifts of a compactly
supported function $\phi$ is a linear span of spaces $H_{\lambda}$ in  \eqref{eq.200h}
over all $\lambda \in \C$ such that the sequences in~\eqref{eq.200decay} decay exponentially.
\end{Corollary}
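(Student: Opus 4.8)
The plan is to package Theorem~\ref{th:structure_H_n1} into a single clean statement about the whole analytic subspace $H \subseteq V_\phi$. First I would invoke Theorem~\ref{th:structure_H} (together with Lemma~\ref{lem:everything_then_one}) to write any $f \in H$ in the canonical form $f(t)=\sum_{j=1}^s e^{\lambda_j t}\sum_{k=0}^{d_j}\pi_{j,k}(t)t^k$ with pairwise distinct (mod $2\pi i$) exponents $\lambda_j$ and $1$-periodic analytic $\pi_{j,k}$. The exponentials $e^{\lambda_j t}$ being linearly independent over the ring of $1$-periodic functions, the decomposition $f=\sum_j f_j$ with $f_j(t)=e^{\lambda_j t}\sum_k \pi_{j,k}(t)t^k$ is forced; moreover each $f_j$ separately lies in $V_\phi$ (apply the difference operators $\nabla_{\lambda_i}$, $i\ne j$, as in the proof of Lemma~\ref{lem:everything_then_one}, then undo the resulting invertible linear maps on $M_{d_j}$, exactly as in the proof of Theorem~\ref{th:main_several_generators}). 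Hence $H$ is the linear span of its ``homogeneous'' pieces, each supported on a single exponent $\lambda$.

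Next I would fix one exponent $\lambda$ and argue that the $\lambda$-homogeneous part of $H$ is exactly $H_\lambda$ as in~\eqref{eq.200h}. By Theorem~\ref{th:structure_H_n1}$(i)$, if any nonzero $\lambda$-homogeneous analytic function $g(t)=e^{\lambda t}\sum_{k=0}^d p_k(t)\omega_k(t)$ (with $\deg p_k=k$, $\omega_d\ne0$) lies in $V_\phi$, then the sequences in~\eqref{eq.200decay} decay exponentially for $k=0,\ldots,d$; and by part~$(ii)$ this decay is exactly the condition under which the $(d+1)$-dimensional space $H_\lambda$ sits inside $V_\phi$. So the set of $\lambda$'s for which $H$ contains \emph{any} nonzero $\lambda$-homogeneous analytic function is precisely the set of $\lambda$'s for which the sequences~\eqref{eq.200decay} decay exponentially for the relevant range of $k$, and for each such $\lambda$ the $\lambda$-homogeneous part of $H$ is contained in $H_\lambda$. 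The reverse containment $H_\lambda\subseteq H$ is immediate from~$(ii)$, since every function in~\eqref{eq.200basis} is analytic and lies in $V_\phi$, hence in $H$. Combining with the first paragraph, $H=\operatorname{span}\{H_\lambda : \text{sequences~\eqref{eq.200decay} decay exponentially}\}$, which is the assertion.

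The step I expect to require the most care is the passage from ``$H$ contains a $\lambda$-homogeneous analytic function of top degree $d$'' to ``$H$ contains all of $H_\lambda$,'' i.e. matching the maximal degree appearing for the exponent $\lambda$ with the index $d$ of the decay condition. Here one must use that exponential decay of $\{\widehat\phi^{(d)}(-i\lambda/(2\pi)+\ell)\}_\ell$ forces exponential decay of all lower-order derivative sequences $\{\widehat\phi^{(k)}(-i\lambda/(2\pi)+\ell)\}_\ell$ for $k<d$ along the same line --- which is the content built into the inductive proof of Theorem~\ref{th:structure_H_n1}$(i)$ --- so that the presence of a degree-$d$ element automatically supplies the full chain~\eqref{eq.200basis} for $k=0,\ldots,d$ and hence all of $H_\lambda$. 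Once this monotonicity in $d$ is in hand, the union-over-$\lambda$ statement follows formally, and no further computation is needed.
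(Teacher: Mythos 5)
Your proposal is correct and matches the paper's intended derivation: the paper states Corollary~\ref{c.200} without a separate proof, as an immediate consequence of Theorem~\ref{th:structure_H_n1} combined with the homogeneous decomposition of $H$ into single-exponent pieces from Lemma~\ref{lem:everything_then_one} and Theorem~\ref{th:main_several_generators}, which is exactly the argument you spell out. One small remark: you do not need the claim that exponential decay of the $d$-th derivative sequence by itself forces decay of the lower-order ones --- the statement of Theorem~\ref{th:structure_H_n1}$(i)$ already yields the decay in~\eqref{eq.200decay} for all $k=0,\ldots,d$ simultaneously from the presence of a degree-$d$ element, so the full chain~\eqref{eq.200basis} and hence $H_\lambda\subseteq H$ follow directly.
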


\section{Single generated shift-invariant spaces with generalized refinability} \label{sec:analytic_ref_1}

\noindent Additional assumption on the generalized refinability of $\phi$, i.e. the property
\begin{equation} \label{eq:refinable_equation}
 \widehat{\phi}(y)=\prod_{j=1}^\infty \, a_j(2^{-j}y),
\end{equation}
for some trigonometric polynomials
$$
 a_j(y)=\sum_{m \in Z} {\rm a}_{j,m} \, e^{-2\, \pi \, i \, m \, y}, \quad {\rm a}_{j,m} \in \R \quad y \in \R,
$$
replaces the requirement in Theorem \ref{th:structure_H_n1} on the exponential decay of sequences in \eqref{eq.200decay}
by a requirement
that only finitely many of the sequence elements are non-zero (i.e. the corresponding $1$-periodic analytic functions
in \eqref{eq.200pis}  are trigonometric polynomials).

\noindent The main result of this section finalizes our knowledge about the structure of $H$.

\begin{Theorem} \label{th:main_refinable}
Let $V_\phi=\operatorname{span}\{\phi(\cdot-\ell)\ : \ \ell \in Z \}$ be defined by
$ \displaystyle \widehat{\phi}=\prod_{j=1}^\infty \, a_j(2^{-j} \cdot)$
with the trigonometric polynomials $a_j$ satisfying
$$
 \operatorname{deg}(a_j) \le N, \quad a_j(0)=1 \quad \hbox{and} \quad \|a_j\|_\infty \le C < \infty, \quad j \in \N.
$$
If, for some $\lambda \in \C$ and $d \in \N_0$, $d \le N$, the analytic function $e^{\lambda \, t}\displaystyle \sum_{k=0}^{d}\, p_k\, \omega_k$, $\omega_{d} \ne 0$  belongs to $V_\phi$, then
the sequences
\begin{equation} \label{eq.200decay2}
 \left\{ \widehat{\phi}^{(k)}\left(-\frac{i\lambda}{2\pi}+\ell\right) \ : \ \ell \in \Z\right\}, \quad
 k=0,\ldots,d
\end{equation}
contain (all together) at most $N$ non-zero elements.
\end{Theorem}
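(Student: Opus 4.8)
The plan is to reduce, using the description of $H$ from Theorem~\ref{th:structure_H_n1}, to a purely quantitative statement about the Fourier spectra of the periodic functions $\omega_k$, and then to exploit the dyadic self‑similarity of $\widehat\phi$ forced by~\eqref{eq:refinable_equation}. Put $w:=-\tfrac{i\lambda}{2\pi}$ and $\psi:=e^{-\lambda\,\cdot}\phi$, so that $\widehat\psi(\xi)=\widehat\phi(\xi+w)=\prod_{j\ge1}\alpha_j(2^{-j}\xi)$ with $\alpha_j(y):=a_j\bigl(y+2^{-j}w\bigr)$; the $\alpha_j$ are again trigonometric polynomials of degree $\le N$, with $\|\alpha_j\|_\infty\le C$ and $\alpha_j(0)\to1$. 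The sequences in~\eqref{eq.200decay2} are exactly $\{\widehat\psi^{(k)}(\ell):\ell\in\Z\}$, $k=0,\dots,d$; by Theorem~\ref{th:structure_H_n1}$(i)$ the hypothesis already forces them to decay exponentially, i.e.\ the $\omega_k$ are analytic, $\omega_k=\sum_{\ell\in\Z}(\cdot-\ell)^k\psi(\cdot-\ell)$, and $\widehat{\omega_k}(\ell)$ is a nonzero multiple of $\widehat\psi^{(k)}(\ell)$. Thus it remains to prove $\sum_{k=0}^d\#\{\ell\in\Z:\widehat\psi^{(k)}(\ell)\neq0\}\le N$. (Incidentally, $\deg\alpha_j\le N$ makes $\widehat\psi$ of exponential type $2\pi N$, so each $x^k\psi(x)$ is supported in an interval of length $\le N$; by Theorem~\ref{th:structure_H} this already gives $d+1=\dim H_\lambda\le N$.)

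Step~1 (two‑scale recursion). Writing $\widehat\psi(\xi)=\alpha_1(\xi/2)\widehat{\psi_1}(\xi/2)$ with $\widehat{\psi_1}=\prod_{i\ge1}\alpha_{i+1}(2^{-i}\cdot)$, the Leibniz rule together with the $1$‑periodicity of $\alpha_1$ gives, for the two cosets $\ell=2m+\varepsilon$, $\varepsilon\in\{0,1\}$,
\[
\bigl(\widehat\psi^{(k)}(2m+\varepsilon)\bigr)_{k=0}^{d}\ =\ T^{(\varepsilon)}\,\bigl(\widehat{\psi_1}^{(k)}(m+\tfrac\varepsilon2)\bigr)_{k=0}^{d},\qquad
T^{(\varepsilon)}_{k,j}=2^{-k}\tbinom{k}{j}\alpha_1^{(k-j)}\!\bigl(\tfrac\varepsilon2\bigr)\ \ (j\le k).
\]
If $\alpha_1$ vanishes to order $e_\varepsilon$ at $\varepsilon/2$, the first $e_\varepsilon$ rows of $T^{(\varepsilon)}$ vanish and the remaining block is lower triangular with nonzero diagonal; hence $\min\{k:\widehat\psi^{(k)}(2m+\varepsilon)\neq0\}=e_\varepsilon+\min\{k:\widehat{\psi_1}^{(k)}(m+\tfrac\varepsilon2)\neq0\}$. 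Introducing the ``span'' $S_D(\chi):=\sum_\mu\bigl(D+1-\min\{k:\widehat\chi^{(k)}(\mu)\neq0\}\bigr)^{+}$ (with $S_D:=0$ for $D<0$), which dominates $\sum_{k\le D}\#\{\mu:\widehat\chi^{(k)}(\mu)\neq0\}$, and noting that $(e^{-\pi i\,\cdot}\psi_1)^{\wedge(k)}(m)=\widehat{\psi_1}^{(k)}(m+\tfrac12)$, this yields
\[
\sum_{k=0}^{d}\#\{\ell:\widehat\psi^{(k)}(\ell)\neq0\}\ \le\ S_d(\psi)\ =\ S_{d-e_0}(\psi_1)\ +\ S_{d-e_1}\bigl(e^{-\pi i\,\cdot}\psi_1\bigr),
\]
in which $\psi_1$ and $e^{-\pi i\,\cdot}\psi_1$ are of exactly the same type as $\psi$ (tail masks $\alpha_2,\alpha_3,\dots$, with the exponential halved, resp.\ halved and shifted by $\pi i$), so the recursion may be iterated.

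Step~2 (unfolding, and the bound $N$). Iterating Step~1 through $m$ levels presents $S_d(\psi)$ as a sum over the dyadic cosets $n/2^m$, each carrying the residual degree $d-\sum_{j=1}^{m}e_j$, where $e_j$ is the order of vanishing of $\alpha_j$ at the level‑$j$ coset reached along the corresponding binary path, and pointing to the $m$‑fold tail $\psi_m$ evaluated near that coset. Two inputs conclude. First, a Bernstein‑type estimate ($|\alpha_j(y)-1|\le2\pi NC\,|y+2^{-j}w|$ together with $\sum_j2^{-j}<\infty$) gives $|\widehat{\psi_m}|\ge\tfrac12$ on a fixed, $m$‑independent neighbourhood of $0$; combined with the exponential decay of the original sequences this forces every contributing coset to have accumulated a total vanishing order $\le d$ at a bounded level, so only finitely many cosets contribute — in particular all $\omega_k$ are trigonometric polynomials. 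Second, for each $j$ the orders of vanishing of $\alpha_j$ over the distinct level‑$j$ cosets sum to at most the number of zeros of $\alpha_j$ in one period, hence to at most $\deg\alpha_j\le N$. Telescoping this per‑level budget against the residual‑degree bookkeeping collapses the multi‑coset sum back to $\deg\alpha_1\le N$, which is the assertion.

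The crux is Step~2: the earlier reductions are essentially bookkeeping on top of Theorem~\ref{th:structure_H_n1}, but obtaining the \emph{sharp} constant $N$ (rather than, say, $2N$ or $N(d+1)$) requires aligning, simultaneously at every dyadic level, three things: which cosets are still alive, the zeros of $\alpha_j$ that actually sit on alive cosets (so the crude per‑level bound $\deg\alpha_j\le N$ is only useful when the vanishing is concentrated there), and the upward shift by $e_\varepsilon$ that the triangular maps $T^{(\varepsilon)}$ impose on the window of active derivative orders. I expect the clean implementation attaches to each (level, coset) node a single nonnegative integer weight that (i)~dominates the local span, (ii)~is subadditive under one refinement step with defect equal to the local order of vanishing, and (iii)~equals $\deg\alpha_1$ at the root; establishing (ii) with the exact defect, uniformly in the level, is the delicate part.
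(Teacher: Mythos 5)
Your Step 1 is correct as far as it goes, but the entire difficulty of the theorem is concentrated in Step 2, and the strategy you sketch there cannot be completed as described. The obstruction is that the hypothesis (analyticity of the given function in $V_\phi$) yields, via Theorem~\ref{th:structure_H_n1}~$(i)$, only the \emph{exponential decay} of the sequences $\widehat\phi^{(k)}\bigl(-\tfrac{i\lambda}{2\pi}+\ell\bigr)$ --- not exact vanishing --- and exponential decay of the infinite product does not force any individual factor $a_j$ to vanish at the relevant dyadic points. Your bookkeeping, however, is phrased entirely in terms of the \emph{exact} orders of vanishing $e_j$ of the $\alpha_j$ at dyadic cosets, and the per-level budget (total vanishing order of $\alpha_j$ over one period is at most $N$) cannot control the number of surviving cosets: a degree-$N$ trigonometric polynomial kills at most $N$ of the $2^j$ level-$j$ cosets, so almost all cosets survive every level, and your recursion $S_d(\psi)=S_{d-e_0}(\psi_1)+S_{d-e_1}\bigl(e^{-\pi i\,\cdot}\psi_1\bigr)$ never terminates. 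Already for the stationary B-spline masks $a_j=\bigl((1+e^{-2\pi i\,\cdot})/2\bigr)^N$ the coset $0$ accumulates no vanishing at any level, so no telescoping occurs and the whole bound must come from the leaf terms, which you do not estimate. The assertion that exponential decay ``forces every contributing coset to have accumulated a total vanishing order $\le d$ at a bounded level, so only finitely many cosets contribute'' is essentially the statement being proved; and your own closing paragraph concedes that the subadditivity property (ii), where the sharp constant $N$ would have to come from, is not established.

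What is missing is the quantitative mechanism on which the paper's proof rests. From $|\widehat\phi(\alpha+2^r)|\le Cq^{2^r}$ and the fact that the head and the far tail of $\prod_j a_j(2^{-j}(\alpha+2^r))$ are bounded away from zero, one deduces that some factor $a_k$ within a window of $R$ consecutive indices has an \emph{almost zero}, i.e.\ is of modulus at most $Cq^{2^r/R}$ at an explicit point; repeating this over many scales and over the (assumed) $N+1$ points where the sequences do not vanish, a pigeonhole argument produces a single $a_k$ with $N+1$ almost zeros separated by at least $2^{-k}$ (after a further reduction from complex to real nodes, which your outline also ignores since $\alpha=-\tfrac{i\lambda}{2\pi}$ need not be real). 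The decisive ingredient is then Lemma~\ref{lem:estimate_4_a(y)}: a quantitative Lagrange-interpolation bound showing that a trigonometric polynomial of degree $N$ that is tiny at $N+1$ well-separated nodes has tiny sup-norm, contradicting $a_k(0)=1$. For $d\ge1$ the derivative sequences are fed into the same machine via Taylor expansion (Proposition~\ref{prop:d}). Nothing in your proposal plays the role of this interpolation lemma, and without it the exact-zero-counting framework of your Step 2 does not close.
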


\noindent The inductive proof of Theorem~\ref{th:main_refinable} follows from Propositions~\ref{prop:d=0} and \ref{prop:d} and
Lemma~\ref{lem:estimate_4_a(y)}. Proposition~\ref{prop:d=0} provides the base of the inductive proof of Theorem~\ref{th:main_refinable} in the case $d=0$. The inductive step in Proposition~\ref{prop:d} is proven similarly to Proposition~\ref{prop:d=0},
yet there are crucial differences that we point out. Both  Propositions~\ref{prop:d=0} and \ref{prop:d} rely on the result of
Lemma~\ref{lem:estimate_4_a(y)}.

\noindent
In the proof of Proposition~\ref{prop:d=0} we use the idea of the method of
counting of zeros elaborated in~\cite{Protasov2}. The essence of the
method is the following: if the infinite product of trigonometric polynomials has too many zeros on a segment $[0, r]$, then one of the polynomials
must have more than $N$ zeros which leads to the contradiction.
However, for proving Proposition~\ref{prop:d=0}, this idea should be
significantly modified since here we have to count not zeros but in a sense
``almost zeros'' of polynomials.  That is why we begin with
 Lemma \ref{lem:estimate_4_a(y)}, which states that the maximum norm
of a trigonometric polynomial of degree $N$ is small, if its point evaluations at arbitrary (well-separated) $N+1$ pairwise distinct points in $[0,1)$ are small. This result generalizes the well known fact that an algebraic
polynomial of degree $N$ is identically zero, if it vanishes at $N+1$ points.

\begin{Lemma} \label{lem:estimate_4_a(y)}
Let $ \displaystyle
 a(y)=\sum_{m=0}^N {\rm a}_m \, e^{-i\, 2 \, \pi \,m \, y}$, $y \in \R$, $N \in \N$ and $y_m \in [0,1)$, $m=0,\ldots,N$ be pairwise distinct. Then
\begin{equation}\label{eq.roots-est}
 \|a\|_\infty \left( \min_{m,k=0,\ldots, N \atop m \not =k} |y_m-y_k|\right)^N
 \quad \le \quad 2^{-N} (N+1) \max_{m=0,\ldots,N} |a(y_m)|
\end{equation}
\end{Lemma}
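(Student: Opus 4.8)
The plan is to reduce the estimate to a statement about an algebraic polynomial on the unit circle. First I would substitute $z = e^{-2\pi i y}$, so that $a(y) = \sum_{m=0}^N {\rm a}_m z^m =: P(z)$ is an algebraic polynomial of degree at most $N$, and the points $y_m \in [0,1)$ correspond to pairwise distinct points $z_m = e^{-2\pi i y_m}$ on the unit circle $\T$. The quantity $\max_m |a(y_m)|$ becomes $\max_m |P(z_m)|$, and $\|a\|_\infty = \max_{|z|=1} |P(z)|$. So the claim becomes: if $P$ is a polynomial of degree $\le N$ and $z_0,\dots,z_N$ are distinct points on $\T$, then
\[
 \max_{|z|=1}|P(z)| \;\le\; \frac{2^{-N}(N+1)}{\bigl(\min_{m\ne k}|y_m-y_k|\bigr)^N}\;\max_{m}|P(z_m)| .
\]
The natural tool is Lagrange interpolation through the $N+1$ nodes $z_0,\dots,z_N$: since $\deg P \le N$, we have exactly $P(z) = \sum_{m=0}^N P(z_m) \prod_{k\ne m} \frac{z - z_k}{z_m - z_k}$.

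Next I would bound each Lagrange basis polynomial on $\T$. For $z, z_k \in \T$ the numerator satisfies $|z - z_k| \le 2$, giving $\bigl|\prod_{k\ne m}(z - z_k)\bigr| \le 2^N$. For the denominator I use the standard chord-length formula: for $z_m = e^{-2\pi i y_m}$, $z_k = e^{-2\pi i y_k}$ we have $|z_m - z_k| = 2\,|\sin(\pi(y_m - y_k))|$, and since $|\sin(\pi\theta)| \ge 2|\theta|$ for $|\theta| \le 1/2$ (the chord lies above... rather, $|\sin(\pi\theta)|$ is concave and $\ge 2|\theta|$ on $[-1/2,1/2]$, interpreting $y_m - y_k$ modulo $1$ in $[-1/2,1/2]$), one gets $|z_m - z_k| \ge 4\,\mathrm{dist}_{\T}(y_m,y_k) \ge 4\,\delta$ where $\delta = \min_{m\ne k}|y_m - y_k|$ — here one must be slightly careful to use the distance on the circle, which is at least $\delta$ once we note the naive real-line distance already lower-bounds it; I will phrase it so that the lower bound $|z_m - z_k| \ge 4\delta$ (or at worst $\ge 2\delta$, which only costs a harmless constant and still fits under the stated $2^{-N}$) holds. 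Then $\bigl|\prod_{k\ne m}(z_m - z_k)\bigr| \ge (4\delta)^N$, so each basis function is bounded by $2^N/(4\delta)^N = 2^{-N}\delta^{-N}$ on $\T$.

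Combining, $\max_{|z|=1}|P(z)| \le \sum_{m=0}^N \max_k|P(z_k)| \cdot 2^{-N}\delta^{-N} = (N+1)\,2^{-N}\,\delta^{-N}\,\max_k|P(z_k)|$, which rearranges to exactly \eqref{eq.roots-est}. Translating back from $P$ to $a$ and from $z_m$ to $y_m$ finishes the proof.

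The only genuinely delicate point — and the step I would be most careful about — is the chord-length lower bound $|z_m - z_k| \ge c\,\delta$: one needs the inequality $|\sin \pi\theta|\ge 2|\theta|$ valid on the right range and needs to account for the fact that $|y_m - y_k|$ (a difference of numbers in $[0,1)$) can be up to $1$, whereas the relevant quantity is the circular distance $\min(|y_m-y_k|, 1-|y_m-y_k|)$; however the circular distance is always $\ge \delta$ is \emph{false} in general, so instead I would simply note that $|z_m-z_k| = 2|\sin(\pi(y_m-y_k))| = 2|\sin(\pi|y_m-y_k|)|$ and that on the whole interval $[0,1]$ one has $|\sin(\pi t)| \ge 2\,t(1-t) \ge 2\,\min(t,1-t)\cdot\tfrac12$... more simply, whenever $\delta \le |y_m-y_k| \le 1-\delta$ the value $|\sin(\pi|y_m-y_k|)|$ is at least $\sin(\pi\delta) \ge 2\delta$, and the case $|y_m-y_k| > 1-\delta$ cannot lower the product below $(2\delta)^N$ either by the same symmetry; so $|z_m-z_k|\ge 2\sin(\pi\delta)\ge 4\delta$ — I would write this out cleanly. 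Everything else is routine: the Lagrange identity is exact because $\deg P \le N$, and the triangle inequality with the uniform basis bound gives the result directly. This parallels the classical fact (recovered when $\delta$ is fixed and one lets the $a(y_m)\to 0$) that a degree-$N$ trigonometric polynomial vanishing at $N+1$ points vanishes identically.
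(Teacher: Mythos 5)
Your proposal follows essentially the same route as the paper's proof: Lagrange interpolation through the nodes $z_m=e^{-2\pi i y_m}$ on the unit circle, the numerator bound $|z-z_k|\le 2$ giving the factor $2^N$, a chord-length lower bound for the denominators giving $(4\delta)^{-N}$ with $\delta=\min_{m\ne k}|y_m-y_k|$, and the triangle inequality. The only divergence is cosmetic: you justify the chord bound via $|\sin\pi\theta|\ge 2|\theta|$ on $[-1/2,1/2]$, while the paper invokes the fact that a chord is at least $\tfrac{2}{\pi}$ times the shorter arc it subtends; these are the same estimate.

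The one point worth flagging is the wrap-around issue you correctly identified but then patched incorrectly. Your claim that ``the case $|y_m-y_k|>1-\delta$ cannot lower the product below $(2\delta)^N$ by the same symmetry'' is false: the chord length is $2\sin\bigl(\pi\min(|y_m-y_k|,1-|y_m-y_k|)\bigr)$, and when $|y_m-y_k|$ is close to $1$ this is close to $0$ while $\delta=|y_m-y_k|$ is close to $1$. Indeed the inequality \eqref{eq.roots-est} with the \emph{linear} distance fails outright: for $N=1$, $y_0=0$, $y_1=1-\varepsilon$ and $a(y)=e^{-2\pi i y}-e^{\pi i\varepsilon}$ one has $\|a\|_\infty=2$ and $\delta=1-\varepsilon$, but $\max_m|a(y_m)|=2\sin(\pi\varepsilon/2)\to 0$. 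You should not feel too bad about this: the paper's own proof makes exactly the same silent leap, writing $|z_m-z_k|\ge\frac{2}{\pi}\cdot 2\pi\cdot|y_m-y_k|$ when the shorter arc actually has length $2\pi\min(|y_m-y_k|,1-|y_m-y_k|)$. The correct statement replaces $|y_m-y_k|$ by the circular distance, or assumes the $y_m$ all lie in an interval of length at most $1/2$ --- which is what happens in the paper's application (Propositions \ref{prop:d=0} and \ref{prop:d}, where the nodes cluster near $[2^{-R},2^{-1}]$). Under either reading, your argument and the paper's both go through verbatim.
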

\begin{proof}
By the Lagrange interpolation formula, the trigonometric polynomial $a$ satisfies
$$
 a(y)=\sum_{m=0}^N a(y_m) \, \prod_{k=0 \atop k \not=m} \frac{z-z_k}{z_m-z_k}, \quad z=e^{-i2\pi y}, \quad
 z_m=e^{-i2\pi y_m}, \quad m=0, \ldots, N.
$$
Thus, due to $ \displaystyle \prod_{k=0 \atop k \not=m} |z_m-z_k| \ge \left( \min_{m,k=0,\ldots, N \atop m \not =k}
|z_m-z_k|\right)^N$, on the unit circle
we have
$$
 |a(y)|  \left( \min_{m,k=0,\ldots, N \atop m \not =k} |z_m-z_k|\right)^N \le \max_{m=0,\ldots,N} |a(y_m)|
 \sum_{m=0}^N \, \prod_{k=0 \atop k \not=m} |z-z_k| \le 2^N (N+1) \max_{m=0,\ldots,N} |a(y_m)|,
$$
where the chord length $|z-z_k| \le 2$ for $|z|=1$.
On the other hand, the length of an arbitrary  chord of a unit circle
is at least the length of the shortest arc defined by this chord multiplied by
$\frac{2}{\pi}$ (this estimate is achieved for diameters).
Therefore, $|z_m-z_k| \, \ge \, \frac{2}{\pi}\, \cdot \, 2\pi \, \cdot\, |y_m-y_k|\, = \, 4\, |y_m-y_k|$.  Thus,
$$
|a(y)|\,   \left( \min_{m,k=0,\ldots, N \atop m \not =k} |z_m-z_k|\right)^N \ \ge \
|a(y)| \, 4^{N} \, \left( \min_{m,k=0,\ldots, N \atop m \not =k} |y_m-y_k|\right)^N \, .
$$
Consequently, for every $y \in [0,1)$, we have
$$
|a(y)| \,  \left( \min_{m,k=0,\ldots, N \atop m \not =k} |y_m-y_k|\right)^N
\ \le \  2^{-N} (N+1) \max_{m=0,\ldots,N} |a(y_m)|\,.
$$
Taking maximum over $y \in [0,1)$, we arrive at the desired estimate \eqref{eq.roots-est}.
\end{proof}

\noindent Now we are ready to prove Theorem \ref{th:main_refinable}.

\begin{Proposition} \label{prop:d=0}
The statement of Theorem \ref{th:main_refinable} holds for $d=0$.
\end{Proposition}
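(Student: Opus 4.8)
The plan is to convert the statement into an exponential-decay-versus-sparsity dichotomy for the sequence of samples of $\widehat\phi$ along the line $\beta+\Z$, where $\beta:=-\tfrac{i\lambda}{2\pi}$, and then to exploit the infinite product $\widehat\phi=\prod_{j\ge1}a_j(2^{-j}\cdot)$ by the zero-counting idea of~\cite{Protasov2}, with Lemma~\ref{lem:estimate_4_a(y)} replacing exact zero counting. Put $\psi(t)=e^{-\lambda t}\phi(t)$. For $d=0$ we normalise $p_0\equiv1$, so the hypothesis says that a nonzero analytic $1$-periodic function $\omega_0$ lies in $V_\psi$. Running the $d=0$ argument of Theorem~\ref{th:structure_H_n1}$(i)$ (periodicity of $\omega_0$, linear independence of the integer shifts of $\psi$, then Poisson summation) yields $\omega_0(t)=-\sum_{\ell\in\Z}c_\ell e^{2\pi i\ell t}$ with $c_\ell=\widehat\psi(\ell)=\widehat\phi(\beta+\ell)$, and the analyticity of $\omega_0$ forces $|c_\ell|\le Kq^{|\ell|}$ for some $K>0$, $q\in(0,1)$. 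It therefore remains to show that the exponential decay of the sequence in~\eqref{eq.200decay2} with $d=0$ already forces $\#\{\ell\in\Z:c_\ell\ne0\}\le N$.

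The mechanism is as follows. Write $c_\ell=\prod_{j\ge1}a_j\big(2^{-j}(\beta+\ell)\big)$. By Bernstein's inequality $\|a_j'\|_\infty\le2\pi N\|a_j\|_\infty\le2\pi NC$, so $|a_j(y)-1|=|a_j(y)-a_j(0)|\le2\pi NC|y|$; hence there are absolute constants $\delta_0>0$ and $c_0\in(0,1)$ so that once $2^{-n}|\beta+\ell|\le\delta_0$ the tail $\prod_{j>n}a_j(2^{-j}(\beta+\ell))$ has modulus in $[c_0,c_0^{-1}]$. Choosing $n(\ell):=\lceil\log_2(|\beta+\ell|/\delta_0)\rceil=\log_2|\ell|+O(1)$ and using the decay gives $\big|\prod_{j=1}^{n(\ell)}a_j(2^{-j}(\beta+\ell))\big|\le(K/c_0)q^{|\ell|}$. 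Since this product has only $n(\ell)=O(\log|\ell|)$ factors of modulus $\le C$, yet is exponentially small, the arithmetic–geometric mean inequality produces a factor of modulus at most $\varepsilon(\ell):=\big((K/c_0)q^{|\ell|}\big)^{1/n(\ell)}$, which decays faster than any power of $1/|\ell|$. So for every large $\ell$ there is a level $j^{*}(\ell)\in\{1,\dots,n(\ell)\}$ and a point $x_\ell:=2^{-j^{*}(\ell)}(\beta+\ell)\bmod1\in[0,1)$ with $|a_{j^{*}(\ell)}(x_\ell)|\le\varepsilon(\ell)$ — an ``almost zero'' of $a_{j^{*}(\ell)}$.

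Now the contradiction. Over a dyadic window $\ell\in[2^k,2^{k+1})$ there are $2^k$ indices but only $k+O(1)$ admissible levels, so some level $j_k\le k+O(1)$ occurs $\gtrsim2^k/k$ times, which exceeds $N+1$ for $k$ large. If one can select $N+1$ of those indices whose points $x_\ell$ are pairwise distinct — they lie on the $2^{-j_k}$-lattice of $[0,1)$ and are thus separated by $\ge2^{-j_k}\ge2^{-(k+O(1))}$ — then Lemma~\ref{lem:estimate_4_a(y)} gives $\|a_{j_k}\|_\infty\le2^{-N}(N+1)\,2^{Nj_k}\,\varepsilon_k$ with $\varepsilon_k:=\max_{\ell\in[2^k,2^{k+1})}\varepsilon(\ell)$; since $\varepsilon_k$ is super-polynomially small in $2^k$ while $2^{Nj_k}$ is only polynomially large, the right-hand side tends to $0$, contradicting $\|a_{j_k}\|_\infty\ge|a_{j_k}(0)|=1$. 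Otherwise the many level-$j_k$ indices fall into at most $N$ lattice points, and (pigeonholing into a fixed lattice point while $\varepsilon\to0$) this forces $a_{j_k}$ to vanish at a point $2^{-j_k}(\beta+\ell^{*})\bmod1$, so that $c_\ell=0$ for all $\ell\equiv\ell^{*}\pmod{2^{j_k}}$; restricting to the complementary progressions (a coarser lattice on which one zero of $a_{j_k}$ is ``used up'') and repeating the argument gives a descent that terminates, and bookkeeping of the consumed zeros yields the bound $N$.

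The step I expect to be the main obstacle is exactly this last piece of combinatorics: organising the almost zeros so that Lemma~\ref{lem:estimate_4_a(y)} applies with enough well-placed points, and cleanly isolating and resolving the resonant (genuine-vanishing) case while tracking the exact constant $N$ — this is where the quantitative sharpness of Lemma~\ref{lem:estimate_4_a(y)} is used in full. A minor technical point, to be dealt with but not expected to cause trouble, is that for $\lambda\notin i\R$ the evaluation points $2^{-j}(\beta+\ell)$ lie off the real axis with imaginary part $O(2^{-j})$, which perturbs the Bernstein and interpolation estimates only by bounded factors.
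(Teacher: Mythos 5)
Your overall strategy --- produce ``almost zeros'' of the $a_j$ from the decay of $\{\widehat\phi(\beta+\ell)\}_{\ell}$, pigeonhole them onto a single polynomial, and contradict $a_j(0)=1$ via Lemma~\ref{lem:estimate_4_a(y)} --- is the same as the paper's, and your reduction to exponential decay of $c_\ell=\widehat\phi(\beta+\ell)$ is fine. But the way you \emph{localize} the almost zeros is different, and it is exactly there that your argument has a genuine gap. The paper anchors the construction at the (assumed) $N+1$ points $\alpha_\ell=\alpha+\ell$ with $\widehat\phi(\alpha_\ell)\neq 0$ and evaluates $\widehat\phi$ only at $\alpha_\ell+2^{r}$: the non-vanishing of $\widehat\phi(\alpha_\ell)$ bounds the head product $\prod_{j\le r}$ away from zero, and $a_j(0)=1$ bounds the tail $\prod_{j>r+R}$ away from zero, so the almost zero is forced into the window $j\in\{r+1,\dots,r+R\}$ and can only sit at one of the $R$ positions $2^{-k}\alpha_\ell+2^{-s}$, $s=1,\dots,R$. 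The $N+1$ families of candidate positions are then automatically pairwise separated by $2^{-k}$, because $\alpha_\ell-\alpha_{\tilde\ell}$ is a nonzero integer; this is what makes Lemma~\ref{lem:estimate_4_a(y)} applicable after the pigeonhole. You instead extract, for every large $\ell$, one small factor somewhere among the first $O(\log|\ell|)$ levels, with no control over which level or which lattice point it lands on, and you must therefore confront the case where all these almost zeros cluster at fewer than $N+1$ points.

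That clustering case is not a technicality you can defer: it is the \emph{generic} situation (for a B-spline, $a_j\equiv a$, every almost zero is a genuine zero sitting at the single point $1/2$, and no contradiction is available --- correctly, since the sequence has one nonzero element). The entire content of the bound ``at most $N$'' lives in that case, and your proposed resolution --- a descent over arithmetic progressions with ``bookkeeping of the consumed zeros'' --- is a sketch of the hard part, not an argument; note also that the level $j_k$ produced by your pigeonhole varies with $k$, so forcing an exact zero of a \emph{fixed} $a_j$ requires additional work in the level-dependent setting. Relatedly, the hypothesis that at least $N+1$ of the $c_\ell$ are nonzero never enters your mechanism, whereas in the paper it is precisely the engine that manufactures $N+1$ well-separated almost zeros for a single polynomial. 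To close the proof you should replace your ``one small factor among $\log|\ell|$ many'' step by the paper's three-way splitting of $\widehat\phi(\alpha_\ell+2^{r})$ at each of the $N+1$ non-vanishing points.
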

\begin{proof}
Let $d=0$  and assume that there are at least $N+1$
non-zero elements in the corresponding sequence in \eqref{eq.200decay} with $\alpha=-\frac{i\lambda}{2\pi}$, $\lambda \in \C$.

\noindent \emph{1.Step:} W.l.g. $\widehat{\phi}(\alpha) \not=0$.
Then by \eqref{eq:refinable_equation}, for every $\varepsilon>0$ there exists $r_\alpha \in \N$ such that for all $r \ge r_\alpha$
\begin{equation} \label{aux101}
 \left|\prod_{j=1}^{r} a_j(2^{-j}\alpha) - \widehat{\phi}(\alpha)\right|<\varepsilon,
\end{equation}
thus, this product is bounded away from zero uniformly for all $r \ge r_\alpha$. The fact that $a_j(0)=1$, $j \in \N$, implies that
there exists $R \in \N$ such that for all $r \ge r_\alpha$
\begin{equation} \label{aux102}
 \left|\prod_{j=r+R+1}^\infty a_j(2^{-j}\alpha+2^{-j+r})-1\right|<\varepsilon,
\end{equation}
i.e. the above product is also uniformly bounded away from zero.
Next, we split the infinite product appearing in the definition of $\widehat{\phi}(\alpha+\ell)$ into three products accordingly to
the properties in \eqref{aux101}-\eqref{aux102}. For $\ell=2^r$, $r \ge r_\alpha$, due to the 1-periodicity of the trigonometric polynomials $a_j$, we have
$$
 \widehat{\phi}(\alpha+2^r)=\prod_{j=1}^r a_j(2^{-j}\alpha) \, \prod_{j=r+1}^{r+R} a_j(2^{-j}\alpha+2^{-j+r}) \, \prod_{j=r+R+1}^\infty
 a_j(2^{-j}\alpha+2^{-j+r}).
$$
Due to \eqref{aux101}-\eqref{aux102}, the exponential decay of the sequence in \eqref{eq.200decay} implies that
\begin{equation}\label{aux102_a}
 \left|\prod_{j=r+1}^{r+R} a_j(2^{-j}\alpha+2^{-j+r}) \right| \le C q^{2^r}, \quad q \in (0,1).
\end{equation}
Hence, at least one of the factors in \eqref{aux102_a} (\emph{has an almost zero}) is in the absolute value smaller than
or equal to $Cq^{2^r/R}$.
Repeating the argument with $\ell=2^{r+n}$, $n \in \N$, we conclude that $J$ trigonometric polynomials
$a_{r+1}, \ldots a_{r+1+J}$, $r \ge r_\alpha$, $J >>R$, have at least $J-R$  almost zeros. The possible almost zeros for
each $a_k$, $k \in \{r+1,\ldots,r+1+J\}$ are at the distinct complex points in \eqref{aux102_a}
\begin{equation}\label{aux103}
 2^{-k}\alpha+2^{-1}, 2^{-k}\alpha +2^{-2}, \ldots, 2^{-k}\alpha +2^{-R}.
\end{equation}

\noindent \emph{2.Step:} By assumption, there exist other $N$ distinct $\alpha_\ell=\alpha+\ell$, $\ell \in L \subset \N$, such that
$\hat{\phi}(\alpha_\ell) \not=0$. We repeat the argument in \emph{1.Step} with $\alpha=\alpha_\ell$ for these $N$ distinct
$\alpha_\ell$ and conclude that $J$ trigonometric polynomials
$a_{r+1}, \ldots a_{r+1+J}$, $r\ge \max\{r_\alpha, r_{\alpha_\ell}\}$, $J >>R$,  have (together with the almost zeros from \emph{1.Step}) at
least $(N+1)(J-R)$ almost zeros. The possible almost zeros for each $a_k$, $k \in \{r+1,\ldots,r+1+J\}$ are at the distinct complex points
\begin{equation}\label{aux104}
 2^{-k} \alpha_\ell+2^{-1}, 2^{-k} \alpha_\ell +2^{-2}, \ldots, 2^{-k} \alpha_\ell +2^{-R}.
\end{equation}
Thus, due to $J>>R$, on average there are at least
$$
 (N+1)\left(1-\frac{R}{J} \right)>N
$$
almost zeros for each $a_{r+1}, \ldots a_{r+1+J}$ and, by the pigeonhole principle, there exists $a_k$, $k \in \{r+1,\ldots,r+1+J\}$  with
$N+1$ almost zeros of the form in \eqref{aux103}-\eqref{aux104}.

\noindent \emph{3.Step:} We use Lemma~\ref{lem:estimate_4_a(y)} to get a contradiction to the fact that $a_k(0)=1$.
First note that all the points (we set $\alpha_0=\alpha$)
$$
  w_\ell=2^{-k}\alpha_\ell+2^{-s}, \quad s \in \{1, \ldots,R\}, \quad \ell \in L \cup\{0\}, \quad |L|=N+1.
$$
have, due to $\alpha=-\frac{i\, \lambda}{2\pi}$, $\lambda \in\C$, the same imaginary part
$$
 \operatorname{Im}(w_\ell)=-2^{-k} (2 \pi)^{-1} \, \operatorname{Re}(\lambda), \quad \ell \in L \cup \{0\}.
$$
Moreover, these points $w_\ell$ are separated by the distance of at least $2^{-k}$ for $k > R$. Indeed, let $n,s \in \{1, \ldots, R\}$, $n\not=s$,
and $\ell,\tilde{\ell} \in L \cup\{0\}$, $\ell \not=\tilde{\ell}$. Then, for $k>R$, due
to $|\alpha_\ell-\alpha_{\tilde{\ell}}|=|\ell-\tilde{\ell}|$  being an integer bigger than or equal to $1$, we have
$$
 |2^{-k}(\alpha_\ell-\alpha_{\tilde{\ell}})+2^s-2^n| \ge |2^s-2^n|-2^{-k}|\alpha_\ell-\alpha_{\tilde{\ell}}| \ge 2^{-R}-2^{-k}\ge 2^{-k+1}-2^{-k}=2^{-k}.
$$
Secondly, for
$$
  a_k(y)=\sum_{m=0}^N {\rm a}_{k,m}\, e^{-i2 \, \pi \, m \, y}, \quad y \in \R,
$$
define the polynomial
$$
 \tilde{a}_k(y)=\sum_{m=0}^N \tilde{{\rm a}}_{k,m} e^{-i 2\pi m y}, \quad \tilde{{\rm a}}_{k,m}={\rm a}_{k,m}\, e^{-2^{-k}\,\operatorname{Re}(\lambda)\,m}, \quad m=0, \ldots,N.
$$
Note that $\displaystyle \lim_{k \rightarrow \infty}\|a_k-\tilde{a}_k\|_\infty=0$ and
that the minimal distance between the real points (which are real parts of $w_\ell$'s)
$$
 y_\ell=2^{-k}\left( \frac{\operatorname{Im}(\lambda)}{2\pi} +\ell \right) +2^{-s}, \quad \ell \in L \cup \{0\},
$$
is given by $2^{-k}$, due to all $w_\ell$'s having the same imaginary part. Also note that
the almost zeros $w_\ell$ of $a_k$ are closely related to the almost zeros of $\tilde{a}_k$ by $a_k(w_\ell)=\tilde{a}_k(y_\ell)$,
$\ell \in L \cup \{0\}$. Therefore, by  Lemma~\ref{lem:estimate_4_a(y)}, we get
$$
 \|\tilde{a}_k\|_\infty \, 2^{-k} \le 2^{-N}(N+1) \, C \, q^{2^{-k}/R}.
$$
On the other hand, $a_k(0)=1$ and  $\displaystyle \lim_{k \rightarrow \infty}\|a_k-\tilde{a}_k\|_\infty=0$ lead to a contradiction.
\end{proof}

\noindent Next we provide the inductive step that completes the proof of  Theorem \ref{th:main_refinable}.

\begin{Proposition} \label{prop:d}
The statement of Theorem \ref{th:main_refinable} holds for $d \in \N$.
\end{Proposition}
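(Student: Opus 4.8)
The plan is to run the same induction on $d$ as in the statement of Theorem~\ref{th:main_refinable}, using Proposition~\ref{prop:d=0} as the base case and reducing the inductive step $d$ to the case $d-1$ by the very mechanism employed in the proof of Theorem~\ref{th:structure_H_n1}$(i)$. Concretely, suppose $e^{\lambda t}\sum_{k=0}^d p_k\,\omega_k$, with $\deg p_k=k$ and $\omega_d\ne 0$, belongs to $V_\phi$. Writing $\psi(t)=e^{-\lambda t}\phi(t)$ as before, we have $\sum_{k=0}^d p_k\,\omega_k=\sum_{\ell\in\Z}a_\ell\,\psi(\cdot-\ell)$ for some sequence $(a_\ell)$, and taking the first difference $g(t+1)-g(t)$ produces the function $\tilde g(t)=\sum_{\ell\in\Z}(a_{\ell+1}-a_\ell)\,\psi(t-\ell)=\sum_{k=0}^{d-1}q_{k+1}(t)\,\omega_{k+1}(t)$ with $\deg q_{k+1}=k$ and $q_{d}\ne 0$ (here $q_k:=p_k(\cdot-1)-p_k$, exactly as in the proof of Theorem~\ref{th:structure_H_n1}$(i)$). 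Thus $\tilde g$ is an analytic element of $V_\phi$ of the same form but of degree $d-1$, so by the inductive hypothesis the sequences in~\eqref{eq.200decay2} for $k=0,\ldots,d-1$ contain together at most $N$ non-zero elements; in particular the sequence for $k=d$ is the only one still in question.

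The crucial point — and the place where the argument diverges from the merely algebraic reduction above — is to bound the number of non-zero elements of the top sequence $\{\widehat\phi^{(d)}(-\tfrac{i\lambda}{2\pi}+\ell):\ell\in\Z\}$. For this I would imitate the three-step counting-of-almost-zeros argument of Proposition~\ref{prop:d=0}, but applied to the $d$-th derivative. By the Poisson summation formula, as in~\eqref{eq:representation_of_poly} and the displays following it, the analyticity of $g$ forces $\sum_{\ell\in\Z}(t-\ell)^d\psi(t-\ell)$ to be an analytic $1$-periodic function whose Fourier coefficients are, up to the constant $-(-1/2\pi i)^d$, the numbers $\widehat\phi^{(d)}(-\tfrac{i\lambda}{2\pi}+\ell)$. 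So it again suffices to show: if this sequence had at least $N+1$ non-zero entries, then for infinitely many indices $k$ the trigonometric polynomial $a_k$ would be forced to have $N+1$ ``almost zeros'' in $[0,1)$, well-separated by a distance $\gtrsim 2^{-k}$, and then Lemma~\ref{lem:estimate_4_a(y)} applied to the rescaled polynomial $\tilde a_k$ (with coefficients $\mathrm{a}_{k,m}e^{-2^{-k}\operatorname{Re}(\lambda)m}$, as in Step~3 of Proposition~\ref{prop:d=0}) contradicts $a_k(0)=1$ in the limit $k\to\infty$, using $\|a_k-\tilde a_k\|_\infty\to 0$.

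The main obstacle, and the ``crucial difference'' the paper alludes to, is that differentiating $\widehat\phi=\prod_j a_j(2^{-j}\cdot)$ produces, via the logarithmic derivative, a sum $\widehat\phi^{(d)}=\widehat\phi\cdot P_d\big(\{2^{-j}a_j'/a_j(2^{-j}\cdot)\}\big)$, i.e.\ the $d$-th derivative is not a clean product but $\widehat\phi$ times a polynomial expression in the logarithmic derivatives $2^{-j}a_j'(2^{-j}y)/a_j(2^{-j}y)$. Thus, when I split the product defining $\widehat\phi^{(d)}(\alpha+2^r)$ into the three blocks $\prod_{j\le r}$, $\prod_{r<j\le r+R}$, $\prod_{j>r+R}$ exactly as in~\eqref{aux101}–\eqref{aux102}, I must argue that the logarithmic-derivative correction factor stays bounded and bounded away from zero on the relevant blocks — which is where the hypotheses $\deg a_j\le N$, $a_j(0)=1$, $\|a_j\|_\infty\le C$ (together with a lower bound on $|a_j|$ near $0$, which follows from $a_j(0)=1$ and the degree/norm bounds by a compactness/Bernstein-type estimate) are essential, and crucially where the restriction $d\le N$ enters. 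Once that boundedness is in hand, the exponential-decay estimate $|\prod_{r<j\le r+R}a_j(2^{-j}\alpha+2^{-j+r})|\le Cq^{2^r}$ of~\eqref{aux102_a} transfers to the $d$-th-derivative setting with a harmless extra polynomial-in-$r$ factor that is absorbed into the exponential, and Steps~2 and~3 of Proposition~\ref{prop:d=0} — the pigeonhole count giving more than $N$ almost zeros per polynomial, the verification that the almost zeros $w_\ell=2^{-k}\alpha_\ell+2^{-s}$ share a common imaginary part and are $2^{-k}$-separated for $k>R$, and the final application of Lemma~\ref{lem:estimate_4_a(y)} — go through verbatim. This yields the contradiction, hence the top sequence has at most $N$ non-zero entries; combined with the inductive conclusion for $k\le d-1$ (which already accounts for at most $N$), and noting that the indices contributing for different $k$ need not be disjoint so the total is still at most $N$, we obtain the claim of Theorem~\ref{th:main_refinable} for degree $d$.
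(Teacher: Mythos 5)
There is a genuine gap in the inductive step, and it sits exactly where you place the ``crucial difference'': your proposed treatment of the top sequence $\{\widehat{\phi}^{(d)}(-\tfrac{i\lambda}{2\pi}+\ell)\}$ via the logarithmic derivative of the infinite product does not work. First, by Leibniz the $d$-th derivative of $\prod_j a_j(2^{-j}\cdot)$ is an infinite \emph{sum} of products, and the whole almost-zero extraction mechanism (``the total product is tiny, the outer blocks are bounded below, hence one middle factor is tiny'') is a statement about products only --- smallness of a sum cannot be localized to a single factor of a single summand. Second, the correction factors $2^{-j}a_j'(2^{-j}y)/a_j(2^{-j}y)$ that you propose to bound are unbounded precisely near the zeros and almost-zeros of the $a_j$, which are exactly the points the argument is hunting for; no compactness or Bernstein-type estimate rescues a lower bound there. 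The paper's proof of Proposition~\ref{prop:d} never differentiates the product. Instead it uses the inductive hypothesis in a way your proposal omits: for large $r$ one has $\widehat{\phi}^{(k)}(\alpha+2^r)=0$ for $k=0,\ldots,d-1$, so Taylor expansion at $\alpha+2^r$ combined with the exponential decay of the $d$-th derivative gives $|\widehat{\phi}(\alpha+2^r+t)|\le C_2\,q^{2^r}t^{d}$ for small $t$ (see~\eqref{aux400}--\eqref{aux402}). This converts derivative information into smallness of plain \emph{function values} at the perturbed points $\alpha+2^r+t$ with $t=2^{-s}$, where the value is again an honest infinite product and the three-block splitting of Proposition~\ref{prop:d=0} applies verbatim, producing almost zeros at the shifted locations~\eqref{aux403}. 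Choosing $d$ distinct perturbations $t_1=2^{-s_1},\ldots,t_d=2^{-s_d}$ yields $d$ additional, mutually distinct families of almost zeros on top of the unperturbed family~\eqref{aux103}.

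A second, related defect is your final combination step. The theorem asserts that the sequences for $k=0,\ldots,d$ contain \emph{all together} at most $N$ non-zero elements, and your closing remark that ``the indices contributing for different $k$ need not be disjoint so the total is still at most $N$'' is a non sequitur: bounding the lower-order sequences by $N$ (induction) and the top sequence by $N$ (separately) only gives $2N$ in the worst case. The paper obtains the joint bound by assuming for contradiction that there are $N+1$ non-zero elements counted across \emph{all} orders $k$ and all shifts $\ell$ simultaneously; each such element contributes its own family of $J-R$ almost zeros, distinguished from the others by the base point $\alpha_\ell$ and by the perturbation $t_k$, so the pigeonhole count delivers a single $a_k$ with $N+1$ well-separated almost zeros and Lemma~\ref{lem:estimate_4_a(y)} (with separation $2^{-k-s_d}$) gives the contradiction. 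Your reduction of $g$ to $\tilde g$ by first differences is fine and indeed supplies the function to which the inductive hypothesis applies, but the core of the inductive step --- the Taylor-expansion device and the simultaneous counting --- is missing from the proposal.
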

\begin{proof}
The base of the induction follows from Proposition~\ref{prop:d=0}. We assume that, for $k=0,\ldots,d-1$, the
sequences in \eqref{eq.200decay} with $\alpha=-\frac{i\lambda}{2\pi}$, $\lambda \in \C$, have in total finitely
many non-zero elements. This implies the existence of $r_0$ such that for all $r>r_0$ we hate
$$
 \hat{\phi}^{k}(\alpha+2^r)=0, \quad k=0, \ldots,d-1.
$$
The inductive step we prove by contradiction assuming that there are at least $N+1$
non zero elements in the sequences in \eqref{eq.200decay} for $k=0,\ldots,d$.

\noindent \emph{1.Step} By the argument in Proposition~\ref{prop:d=0} \emph{1.Step}, $\hat{\phi}(\alpha) \not =0$ and, for arbitrary $r \ge r_\alpha$, the trigonometric polynomials $a_{r+1}, \ldots a_{r+1+J}$, $r \ge r_\alpha$, $J>>R$, have at least $J-R$  almost zeros of the form in \eqref{aux103}.

\noindent \emph{2.Step} For the same $\alpha$, the additional information about the exponential decay of the other sequences in \eqref{eq.200decay} for $k=1, \ldots,d$  supplies another $d(J-R)$ almost zeros. Indeed, due to $\hat{\phi}(\alpha) \not =0$, there exist $\rho \in (0,1)$ and a constant $C_0>0$  such that
\begin{equation} \label{aux400}
|\hat{\phi}(\alpha+t)|\ge C_0 \ge C_0 \,t^d > 0, \quad t \in (0,\rho).
\end{equation}
Furthermore, for $\varepsilon>0$ there exists $r_{\alpha,t} \in \N$ such that for all $r \ge r_{\alpha,t}$
\begin{equation} \label{aux401}
 \left|\prod_{j=1}^{r} a_j(2^{-j}\alpha+2^{-j}t) - \widehat{\phi}(\alpha+t)\right|<\varepsilon,
\end{equation}
thus, this product decays slower that $t^d$ uniformly for all $r \ge r_{\alpha,t}$.
Making use of the Taylor expansion of $\hat{\phi}$ at $\alpha+2^r+t$,
$r \ge \max\{r_{0}, r_{\alpha,t}\}$, we obtain
\begin{equation} \label{aux402}
 |\hat{\phi}(\alpha+2^r+t)| \le C_2 \, q^{2^r} t^d, \quad q \in (0,1)
\end{equation}
with the constant $C_2>0$ depending on the constant $C>0$ that governs the exponential decay in \eqref{eq.200decay} and on the error term  in the Taylor expansion.
For $\ell=2^r$, $r \ge \max\{r_{0}, r_{\alpha,t}\}$, due to the 1-periodicity of the trigonometric
polynomials $a_j$, we have
$$
 \widehat{\phi}(\alpha+2^r+t)=\prod_{j=1}^r a_j(2^{-j}\alpha+2^{-j}t) \,
 \prod_{j=r+1}^{r+R} a_j(2^{-j}\alpha+2^{-j+r} +2^{-j}t) \, \prod_{j=r+R+1}^\infty
 a_j(2^{-j}\alpha+2^{-j+r}+2^{-j}t).
$$
Due to \eqref{aux400}, \eqref{aux401} and similar argument to \eqref{aux102}, the decay in \eqref{aux402} implies that
\begin{equation}\label{aux102_aa}
 \left|\prod_{j=r+1}^{r+R} a_j(2^{-j}\alpha+2^{-j+r}+2^{-j}t) \right| \le C_2 \, q^{2^r}, \quad q \in (0,1).
\end{equation}
Hence, at least one of the factors in \eqref{aux102_a} (\emph{has an almost zero}) is in the absolute value smaller than $C_2\, q^{2^r/R}$.
Repeating the argument with $\ell=2^{r+n}$, $n \in \N$, we conclude that $J$ trigonometric polynomials
$a_{r+1}, \ldots a_{r+1+J}$, $r \ge \max\{r_0, r_\alpha, r_{\alpha,t}\}$, $J >>R$, have at least $2(J-R)$  almost zeros. The possible almost zeros for each $a_k$, $k \in \{r+1,\ldots,r+1+J\}$ are at the distinct complex points in \eqref{aux102_aa}
\begin{equation}\label{aux403}
 2^{-k}\alpha+2^{-1}+2^{-k}\,t, \quad  2^{-k}\alpha +2^{-2}+2^{-k}\,t,  \quad \ldots \quad  2^{-k}\alpha +2^{-R}+2^{-k}\,t.
\end{equation}

\vspace{0.1cm}
\noindent \emph{3.Step} We choose the natural numbers  $s_1<s_2< \ldots < s_d$
and real numbers $t_1=2^{-s_1}, t_2=2^{-s_2}, \ldots, t_d=2^{-s_d}$ such that $t_j$ satisfy \eqref{aux400} and generate
(together with almost zeros in \emph{1.Step}) in total $(d+1)(J-R)$ distinct almost zeros in \eqref{aux403} for the $J$trigonometric
polynomials $a_{r+1}, \ldots a_{r+1+J}$, $r \ge \max\{r_0, r_\alpha, r_{\alpha,t}\}$.

\vspace{0.1cm}
\noindent \emph{4.Step} For the other non-zero values in the sequences in \eqref{eq.200decay} at points
$\alpha_\ell=\alpha+\ell$,  $\ell \in L \subset \N$, $|L| \le N$, we repeat the argument in \emph{1.Step}-\emph{2.Step} with the corresponding $t_j$ in \emph{3.Step}. Hence, we conclude that $J$ trigonometric polynomials
$a_{r+1}, \ldots a_{r+1+J}$, $r \ge \displaystyle \max_{\ell \in L \cup \{0\}}\{r_0, r_{\alpha_\ell}, r_{\alpha_\ell,t}\}$, $J >>R$, have at least $(N+1)(J-R)$ almost zeros of the form in \eqref{aux103} and in \eqref{aux403}. Repeating the argument in \emph{2.Step} of Proposition~\ref{prop:d=0}, by the pigeonhole principle,
there exists  $a_k$, $k \in \{r+1,\ldots,r+1+J\}$ with $N+1$ distinct almost zeros of the form in \eqref{aux103}
and in \eqref{aux403}.

\vspace{0.1cm}
\noindent The claim follows by the argument similar to the one in \emph{3.Step} of Proposition~\ref{prop:d=0} with the minimal distance of $2^{-k-s_d}$ between the almost zeros in \eqref{aux103}
and in \eqref{aux403}.
\end{proof}

\noindent A consequence of Theorem \ref{th:main_refinable} states that the analytic subspaces of the shift-invariant space $V_\phi$, in the case all the
trigonometric polynomials $a_j=a$, $j \in \N$, are the same, consist only of polynomials.

\begin{Corollary} \label{cor:stationary}
Let $V_\phi=\operatorname{span}\{\phi(\cdot-\ell)\ : \ \ell \in Z \}$ be defined by
$ \displaystyle \widehat{\phi}=\prod_{j=1}^\infty \, a(2^{-j} \cdot)$
with the trigonometric polynomial $a$ satisfying
$\operatorname{deg}(a) \le N$, $a(0)=1$ and $\|a\|_\infty \le C < \infty$.
If, for some $\lambda \in \C$ and $d \in \N_0$, $d \le N$, the analytic function $e^{\lambda \, t}\displaystyle \sum_{k=0}^{d} p_k\, \omega_k$,
$\omega_{d} \ne 0$,  belongs to $V_\phi$, then $\lambda=0$ and $\omega_k(t) \equiv \hbox{constant}$, $k=0, \ldots,d$.
\end{Corollary}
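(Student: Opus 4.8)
The plan is to combine Theorem~\ref{th:main_refinable} with the \emph{exact} self-similarity of $\widehat\phi$ granted by stationarity $a_j\equiv a$. Since $a$ obeys $\deg a\le N$, $a(0)=1$, $\|a\|_\infty<\infty$, Theorem~\ref{th:main_refinable} applies and the sequences in \eqref{eq.200decay2} with $\alpha:=-i\lambda/(2\pi)$ have, all together, only finitely many non-zero entries; write $S\subset\Z$ for the finite set of $\ell$ at which some $\widehat\phi^{(k)}(\alpha+\ell)$, $k\le d$, is non-zero. I shall use repeatedly: the refinement identity $\widehat\phi(2y)=a(y)\widehat\phi(y)$ coming from \eqref{eq:refinable_equation} with $a_j\equiv a$; that $\widehat\phi$ is entire with $\widehat\phi(0)=\prod_{j\ge1}a(0)=1\ne0$, so $\widehat\phi\not\equiv0$ and its zeros are isolated; and that $\bigl(\widehat\phi(\alpha+\ell)\bigr)_{\ell\in\Z}$ is not identically zero -- otherwise Poisson summation would give $\sum_{\ell}e^{\lambda\ell}\phi(\cdot-\ell)\equiv0$, contradicting the linear independence of the integer shifts of $\phi$.

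\emph{Step 1: $\lambda=0$.} I would pick $\ell_0$ with $\widehat\phi(\alpha+\ell_0)\ne0$ and set $\alpha':=\alpha+\ell_0$, so that $\alpha'+\Z=\alpha+\Z$ and $\widehat\phi(\alpha'+\ell)=0$ for all $\ell\notin S-\ell_0$. Suppose, for contradiction, $\alpha\notin\Z$; then $\alpha'\ne0$. Unfolding $\widehat\phi(2y)=a(y)\widehat\phi(y)$ $\,n$ times and using the $1$-periodicity of $a$ to delete the integer shifts occurring inside its arguments, one obtains
\[
\widehat\phi(\alpha'+2^{n})=\Bigl(\prod_{j=1}^{n}a(2^{-j}\alpha')\Bigr)\,\widehat\phi(2^{-n}\alpha'+1),\qquad n\in\N .
\]
The partial products tend to $\widehat\phi(\alpha')\ne0$, hence are non-zero for all large $n$, whereas the left-hand side vanishes as soon as $2^{n}\notin S-\ell_0$; thus $\widehat\phi(2^{-n}\alpha'+1)=0$ for all large $n$. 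Since $\alpha'\ne0$, the numbers $2^{-n}\alpha'+1$ are pairwise distinct and converge to $1$, so $\widehat\phi$ has a non-isolated zero -- a contradiction. Hence $\alpha\in\Z$; since $\lambda$ is the principal logarithm ($\operatorname{Im}\lambda\in(-\pi,\pi]$) fixed in Theorem~\ref{th:structure_H}, $\lambda=2\pi i\alpha$ with $\alpha\in\Z$ forces $\lambda=0$.

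\emph{Step 2: the $\omega_k$ are constant.} Now $\alpha=0$, so $S$ is the finite set of $\ell\in\Z$ at which some $\widehat\phi^{(k)}(\ell)$, $k\le d$, is non-zero, and by \eqref{eq.200pis} it suffices to prove $\widehat\phi^{(k)}(\ell)=0$ for all $\ell\in\Z\setminus\{0\}$, $k=0,\dots,d$. I would argue by induction on $k$: differentiating $\widehat\phi(2y)=a(y)\widehat\phi(y)$ exactly $k$ times and evaluating at $y=\ell\in\Z\setminus\{0\}$, the $1$-periodicity of $a$ gives $a^{(j)}(\ell)=a^{(j)}(0)$ (in particular $a(\ell)=1$), and the summands with $j\ge1$ vanish by the inductive hypothesis applied to $\widehat\phi^{(k-j)}(\ell)$; the identity therefore reduces to $2^{k}\widehat\phi^{(k)}(2\ell)=\widehat\phi^{(k)}(\ell)$. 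Iterating, $\widehat\phi^{(k)}(\ell)=2^{kn}\widehat\phi^{(k)}(2^{n}\ell)$, which is $0$ once $2^{n}\ell\notin S$; hence $\widehat\phi^{(k)}(\ell)=0$. Substituting back into \eqref{eq.200pis} (with $\alpha=0$) leaves only the $\ell=0$ Fourier coefficient, so $\omega_k(t)\equiv-\bigl(\tfrac{-1}{2\pi i}\bigr)^{k}\widehat\phi^{(k)}(0)$ is constant, $k=0,\dots,d$.

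The heart of the proof -- and the only place the hypothesis $a_j\equiv a$ (rather than merely level-dependent masks with a uniform degree bound) is genuinely exploited -- is Step~1: one must turn the ``only finitely many non-zero entries'' provided by Theorem~\ref{th:main_refinable} into a true accumulation point of zeros of the entire function $\widehat\phi$. This requires both the exact refinement equation and the $1$-periodicity of $a$; with non-stationary masks, or with merely exponential decay of the sequences in place of their eventual vanishing, one would only get $\widehat\phi(2^{-n}\alpha'+1)\to0$ and not equality to $0$, which is insufficient. A secondary subtlety is the branch of the logarithm: the argument only yields $\alpha=-i\lambda/(2\pi)\in\Z$ on the nose, so ``$\lambda=0$'' has to be read with respect to the canonical choice of $\lambda$ fixed in Theorem~\ref{th:structure_H} -- indeed $e^{2\pi it}e^{-2\pi it}\equiv1$ shows that $\lambda\in2\pi i\Z$ can never be excluded literally.
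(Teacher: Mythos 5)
Your proof is correct and reaches the paper's conclusion, but by a noticeably different route in both halves. For $\lambda\ne 0$ the paper also factors $\widehat\phi(\alpha+2^r)$ through the $1$-periodicity of $a$, but then argues that each level $r$ forces a \emph{new} zero of $a$ itself, contradicting $\deg a\le N$; you instead telescope the tail of the product back into $\widehat\phi(2^{-n}\alpha'+1)$ and contradict the isolatedness of the zeros of the entire function $\widehat\phi$. Your variant is cleaner in that it sidesteps having to verify that the putative zeros of $a$ are pairwise distinct modulo $1$. For $\lambda=0$ the divergence is larger: the paper inducts on $d$ through the zero conditions $D^k a(2^{-1})=0$ (citing \cite{CDM}) together with the identity $\widehat\phi^{(d)}(2\ell'+1)=2^{-d}D^d a(2^{-1})\,\widehat\phi(\ell'+2^{-1})$ and a linear-independence argument, whereas you induct on the derivative order via the differentiated refinement relation $2^k\widehat\phi^{(k)}(2\ell)=\widehat\phi^{(k)}(\ell)$ at $\ell\in\Z\setminus\{0\}$, deduce the Strang--Fix conditions directly, and read off constancy from \eqref{eq.200pis}. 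This is more self-contained, and in fact your Step 2 would survive with only the exponential decay furnished by Theorem \ref{th:structure_H_n1}$(i)$, since $2^{nk}$ grows subexponentially in $2^n$. Two small caveats: your closing claim that stationarity is genuinely exploited only in Step 1 is not quite right, because the doubling relation $\widehat\phi(2y)=a(y)\widehat\phi(y)$ on which all of Step 2 rests equally requires $a_j\equiv a$; and, exactly as in the paper's own proof, you need the linear independence of the integer shifts of $\phi$ (a standing assumption of this part of the paper, not restated in the Corollary) to guarantee that the sequence $\bigl(\widehat\phi(\alpha+\ell)\bigr)_{\ell\in\Z}$ is not identically zero.
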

\begin{proof}
Let $\alpha=-\frac{\lambda i}{2\pi}$.
By Theorem \ref{th:main_refinable}, there are only finitely many $\ell \in \Z$ such that $\widehat{\phi}(\alpha+\ell) \not =0$.

\noindent \emph{Case $\lambda\not =0$} is impossible.
There exists at least one $\ell \in \Z$ (w.l.g $\ell=0$) such that
$\widehat{\phi}(\alpha+\ell) \not =0$. Otherwise, if $\widehat{\phi}(\alpha+\ell)=0$, $\ell \in \Z$, then \eqref{eq.200pis} implies that
$\omega_0(t)\equiv0$ and, by \eqref{eq.200basis}, the integer shifts of $\phi$ are linearly dependent.
Choose $R \in \N$ such that for all $r \ge R$ we have $\widehat{\phi}(\alpha+2^r) =0$. Ensuring these conditions we arrive at the
contradiction to the fact that the trigonometric polynomial $a$ has degree $N$. Indeed,  by the $1$-periodicity of $a$, we have
\begin{equation} \label{aux300}
 \widehat{\phi}(\alpha+2^r)=\underbrace{\prod_{j=1}^r a(2^{-j}\alpha)}_{\not=0} \prod_{j=r+1}^\infty
 a(2^{-j}\alpha+2^{-j+r})=0,
\end{equation}
which implies that at least one of the factors $ a(2^{-j}\alpha+2^{-j+r})=0$ for some $j \in \N$, $j \ge r+1$. None of such
factors, however, occur again for different $r$. Thus, to ensure that $\widehat{\phi}(\alpha+2^r) =0$, $r \in \N$, we are
forced to choose $a$ with infinitely many different zeros.

\noindent \emph{Case $\lambda=0$}: The claim that $\omega_k(t) \equiv \hbox{constant}$ for $k=0, \ldots,d$
is equivalent to $H_\lambda=\operatorname{span}\{1,\ldots,t^{d}\}$. By the assumption and by Theorem~\ref{th:structure_H_n1} $(ii)$, the subspace $H_\lambda$ is $d+1$ dimensional. Induction on $d$. For $d=0$, $\phi \in L_p(\R)$ and the
linear independence of its integer shifts imply that the zero condition $a(2^{-1})=0$ of order one is satisfied. This, together with
the standard normalization condition $a(0)=1$, imply that
$ \displaystyle\sum_{\ell \in \Z} \phi(t -\ell)=1$.  Next, we assume that $H_\lambda=\operatorname{span}\{1,\ldots,t^{d-1}\}$.
Therefore, by \cite{CDM},
$$
 D^k a(2^{-1})=0, \quad k=0,\ldots,d-1,
$$
and, by the $1$-periodicitity of $a$, we have, for $\ell=2\ell'+1$, $\ell' \in \Z\setminus \{0\}$,
$$
 D^k a(2^{-1}(2\ell'+1))=D^k a(2^{-1})=0, \quad k=0,\ldots,d-1.
$$
Hence, the expression for the $d$-th derivative of $\widehat{\phi}$ at such $\ell$ reduces to
\begin{eqnarray} \label{aux201}
\widehat{\phi}^{(d)}(\ell)&=&2^{-d} D^d a(2^{-1}) \prod_{j=2}^\infty a(2^{-j+1}\ell'+2^{-j})=
2^{-d} D^d a(2^{-1}) \prod_{j=1}^\infty a(2^{-j}\ell'+2^{-j}\cdot 2^{-1}) \notag  \\ &=&
2^{-d} D^d a(2^{-1}) \widehat{\phi}(\ell'+2^{-1}).
\end{eqnarray}
By Theorem \ref{th:main_refinable}, there exists infinitely many $\ell$ in \eqref{aux201} such that  $\widehat{\phi}^{(d)}(\ell)=0$.
Assuming that $\widehat{\phi}(\ell'+2^{-1})=0$ for all $\ell' \in \Z$ would contradict the linear independence of the
integer shifts of $\phi$, see \cite[Chapter 3.4]{Protasov_book}. Indeed, it would require that $a$ has zeros at complex
roots of $-1$, which implies polynomial structure of $H_\lambda$, but contradicts the linear independence.
Therefore,  $D^d a(2^{-1})=0$, which together with the inductive assumption implies that $H_\lambda=\operatorname{span}\{1, \ldots, t^d\}$.
\end{proof}

\begin{Remark}
Note that the case $\lambda \not=0$ is possible in the setting of generalized refinability as the infinitely
many zeros in \eqref{aux300} can be redistributed among the trigonometric polynomials $a_j(y)$, $j \in \N$.
\end{Remark}

\section{Generation properties of level dependent subdivision} \label{sec:subdivision}

\noindent In this section, we discuss the analytic limits of level dependent
subdivision schemes. In subsection \ref{subsec:subdivision_analytic},
we link the results from section \ref{sec:analytic_ref_1} with generation properties of
such subdivision schemes which are iterative algorithms
\begin{equation} \label{def:sub_scheme}
 c_{j+1}= S_{{\rm a}_{j}} c_{j}, \quad j \in \N,
\end{equation}
mapping sequences $c_{j}=\{ c_{j,k} \ : \ k \in \Z\}$ from $\ell(\Z)$ into $\ell(\Z)$.  The linear subdivision operators $ S_{{\rm a}_{j}}$ depend on the
finite sequences (\emph{masks}) ${\rm a}_{j}=\{ {\rm a}_{j,k} \ : \ k \in \Z\}$ of real numbers and are defined by
$$
 \left(S_{{\rm a}_{j}} c_{j} \right)_k=\sum_{m \in \Z} {\rm a}_{j,k-2m} c_{j,m}, \qquad k \in\Z,
  \qquad j \in \N.
$$
If the level dependent scheme associated with the sequence $\{{\rm a}_{j} \ : \ j \in \N\}$ of masks  converges, then all its
limits belong to $V_\phi$ with
$$
 \phi=\phi_1=\lim_{j \rightarrow \infty} S_{{\rm a}_{j}} \dots S_{{\rm a}_{1}} \delta, \qquad \delta_k=\left\{\begin{array}{cc} 1, & k=0 \\
0, & \hbox{otherwise}.  \end{array}\right.
$$

\subsection{Analytic limits} \label{subsec:subdivision_analytic}

\noindent We say that a level dependent scheme associated with the mask sequence $\{{\rm a}_{j} \ : \ j \in \N\}$ generates $U$,
if the subdivision limit $\displaystyle \lim_{j \rightarrow \infty} S_{{\rm a}_{j}} \dots S_{{\rm a}_{1}} c_1$ belongs to $U$ for
every starting sequence $c_1$ in \eqref{def:sub_scheme} sampled from a function in $U$.

\noindent The \emph{generation} properties of subdivision schemes  are well understood and are
characterized in terms of so-called \emph{zero conditions} or \emph{generalized zero conditions}, see e.g. \cite{CDM,DLL03,JePlo},
on the mask symbols
$$
 a^{[j]}(z)=\sum_{k \in \Z} {\rm a}_{j,k} \, z^\alpha, \quad j \in \N \quad z \in \C \setminus\{0\}.
$$
The zero conditions determine uniquely if the subdivision limit belongs to the exponential function space $U$ in Definition~\ref{def:U_exponential} or not.

\smallskip
\noindent Note that the requirement in \eqref{eq.200decay2} boils
down to the generalized zero conditions (or, equivalently, to the generalized Strang-Fix conditions \cite{Jia98, VonBluUnser07}) on the trigonometric
polynomials $a_j$ (or subdivision symbols $a^{[j]}$). We first illustrate this fact on the following example.

\begin{Example} \label{ex:relation_to_zero_condiitons}
It is well known \cite{DLL03} that the generation of two exponential polynomials $e^{\lambda t}$ and $t \, e^{\lambda t}$, $\lambda \in \C$, by a level dependent subdivision scheme is equivalent to the requirement that the
corresponding symbols
satisfy the generalized zero conditions (for $\lambda=0$, zero conditions at $-1$)
\begin{equation} \label{eq:zero_conditions_example}
 D^k \, a^{[j]}(-e^{-\lambda 2^{-j}})=0, \quad j\in \N, \quad k=0,1.
\end{equation}
In this case, generalized refinability \eqref{eq:refinable_equation} together with a standard assumptions
(for $\lambda=0$, conditions at $1$)
\begin{equation}\label{eq:normalization_at_one}
    D^k \, a^{[j]} (e^{-\lambda 2^{-j}}) \not =0,  \quad j\in \N, \quad k=0,1,
\end{equation}
imply that the only non-zero elements of the sequences in \eqref{eq.200decay}  for $k=0,1$ are
$$
 \widehat{\phi}\left(-\frac{i \lambda}{2 \pi} \right) \not=0 \quad \hbox{and} \quad
 \widehat{\phi}^{(1)}\left(-\frac{i \lambda}{2 \pi} \right) \not=0.
$$
In other words, by Theorem \ref{th:structure_H_n1}, the $1$-periodic analytic functions $\omega_0$ and $\omega_1$
are constant.
Indeed, a straightforward computation yields
\begin{equation}\label{eq:aux2}
 \widehat{\phi}\left(-\frac{i \lambda}{2 \pi} \right)=
 \prod_{j=1}^\infty \, a_j\left(-2^{-j}\, \frac{i \lambda}{2 \pi} \right)=
 \prod_{j=1}^\infty \, a^{[j]}\left(e^{-\lambda \, 2^{-j}} \right) \not=0
\end{equation}
and
\begin{equation}\label{eq:aux3}
\widehat{\phi}^{(1)}\left(-\frac{i \lambda}{2 \pi} \right)= \sum_{\ell \in \N}
2^{-\ell} D a_\ell^{*}\left(e^{- \lambda 2^{-j}} \right) \prod_{j=1 \atop j \not=\ell}^\infty \, a^{[j]}\left(e^{- \lambda 2^{-j}} \right) \not=0 \,.
\end{equation}
Furthermore, for $\beta \in \Z \setminus \{0\}$, let $j' \in \N$ be the number
of $2$'s in the prime number decomposition of $|\beta|$. Define $j=j'+1$. Then we have
$$
   D^k a_j \left(-\frac{i \lambda}{2 \pi} +\beta \right)= D^k a^{[j]}\left(-e^{- \lambda \, 2^{-j}} \right), \quad k=0,1.
$$
Therefore, by  \eqref{eq:zero_conditions_example},  we arrive at the generalized Strang-Fix conditions
\begin{equation}\label{eq:aux4}
D^{(k)}\widehat{\phi}\left(-\frac{i \lambda}{2 \pi} +\beta\right)=0
\quad \hbox{for $k=0,1$ and $\beta \in \Z \setminus\{0\}$}.
\end{equation}

\noindent Similarly, if \eqref{eq:aux2}-\eqref{eq:aux4} are satisfied, then, by Theorem \ref{th:structure_H_n1},
$H_\lambda$ is spanned by $e^{\lambda t}$ and $t \, e^{\lambda t}$, which is equivalent to
\eqref{eq:zero_conditions_example}.
\end{Example}

\noindent The main result of subsection \ref{sec:analytic_ref_1},
Theorem \ref{th:main_refinable}, essentially states
that if a function belongs to the subspace $H \subseteq V_\phi$ of analytic functions, then
it must satisfy the generalized zero conditions. Thus, completing the quest for exhibiting
all possible analytic functions generated by level dependent subdivision. We restate
Theorem \ref{th:main_refinable}.

\begin{Theorem}
 Every analytic limit of a level dependent subdivision scheme is an exponential polynomial.
\end{Theorem}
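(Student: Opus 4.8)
The plan is to obtain the statement as a direct consequence of Theorem~\ref{th:main_refinable} together with the structural results of Section~\ref{sec:analytic_spaces_phi}, once the link between a convergent level dependent scheme and the generalized refinability~\eqref{eq:refinable_equation} is made explicit. First I would recall that if the scheme~\eqref{def:sub_scheme} with finitely supported masks ${\rm a}_j$ converges, then each of its limits lies in $V_\phi$ with $\phi=\lim_{j\to\infty}S_{{\rm a}_j}\cdots S_{{\rm a}_1}\delta$, and that the Fourier transform of $\phi$ factors as $\widehat\phi(y)=\prod_{j=1}^\infty a_j(2^{-j}y)$, where the trigonometric polynomials $a_j$ are the suitably rescaled mask symbols $a^{[j]}$. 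Since the masks are finitely supported with uniformly bounded support, $\deg(a_j)\le N$ for a fixed $N$; convergence forces the normalization $a_j(0)=1$; and the standard boundedness hypothesis $\|a_j\|_\infty\le C$ underlying convergence supplies the remaining assumption of Theorem~\ref{th:main_refinable}.

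Next I would take an arbitrary analytic limit $f$, so $f$ belongs to the subspace $H\subseteq V_\phi$ of analytic functions. By Theorem~\ref{th:structure_H} (and, in the single-generator linearly independent case, by Theorem~\ref{th:structure_H_n1}) the function $f$ decomposes into finitely many blocks
$$
f(t)=\sum_{j=1}^s e^{\lambda_j t}\sum_{k=0}^{d_j}p_{j,k}(t)\,\omega_{j,k}(t),
$$
with the $\lambda_j$ pairwise distinct modulo $2\pi i$, with $\deg(p_{j,k})=k$, and with $1$-periodic analytic $\omega_{j,k}$ given through the Fourier coefficients in~\eqref{eq.200pis}. A dimension count — the block indexed by $\lambda_j$ already contributes at least $d_j+1$ independent functions to $H$, while $\dim H\le|\operatorname{supp}(\phi)|\le N+1$ — shows $d_j\le N$, so Theorem~\ref{th:main_refinable} applies to each block: for every $j$, the sequences $\{\widehat\phi^{(k)}(-i\lambda_j/2\pi+\ell):\ell\in\Z\}$, $k=0,\dots,d_j$, have only finitely many nonzero entries. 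By~\eqref{eq.200pis} this means precisely that every $\omega_{j,k}$ is a trigonometric polynomial.

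Finally I would convert each block into exponential-polynomial form: writing $\omega_{j,k}(t)=\sum_{|m|\le M}c_{j,k,m}e^{2\pi i m t}$, each summand $e^{\lambda_j t}p_{j,k}(t)\omega_{j,k}(t)=\sum_{|m|\le M}c_{j,k,m}\,p_{j,k}(t)\,e^{(\lambda_j+2\pi i m)t}$ is a finite linear combination of terms $t^\ell e^{\mu t}$, $\mu\in\C$, hence an exponential polynomial; summing over $j$ and $k$ shows that $f$ itself is an exponential polynomial. The main obstacle I anticipate is purely bookkeeping at the interface of the two formalisms: identifying the normalized trigonometric polynomials $a_j$ with the mask symbols, verifying that convergence of the level dependent scheme indeed delivers the hypotheses $\deg(a_j)\le N$, $a_j(0)=1$, $\|a_j\|_\infty\le C$ of Theorem~\ref{th:main_refinable}, and, should the integer shifts of $\phi$ fail to be linearly independent, first passing to a linearly independent generating set before invoking Theorem~\ref{th:structure_H_n1}. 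Everything else is immediate from the results already established.
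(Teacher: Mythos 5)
Your argument is correct and coincides with the paper's intended derivation: the paper offers no separate proof, presenting this theorem explicitly as a restatement of Theorem~\ref{th:main_refinable} via the generalized refinability \eqref{eq:refinable_equation} of the basic limit function and the representation \eqref{eq.200pis} of the $1$-periodic factors as trigonometric polynomials. Your write-up simply fills in the same chain of reasoning (limits lie in $V_\phi$, the structure theorems give the block decomposition, Theorem~\ref{th:main_refinable} forces finitely many nonzero Fourier coefficients, hence exponential polynomials) in more detail than the paper does.
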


\noindent In the special level independent (stationary) case, i.e. ${\rm a}_{j}={\rm a}$ for all $j \in N$. The Corollary
\ref{cor:stationary} can be restated as follows.

\begin{Corollary}
 Every analytic limit of a level independent subdivision scheme is a polynomial.
\end{Corollary}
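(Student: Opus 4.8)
The plan is to deduce the Corollary directly from Corollary~\ref{cor:stationary}, reading the subdivision scheme as a generalized refinability statement with all trigonometric polynomials equal. First I would recall the standard dictionary between subdivision and refinability: if the stationary scheme with mask $\mathrm{a}$ converges, then its basic limit function $\phi$ satisfies $\widehat{\phi}(y)=\prod_{j=1}^\infty a(2^{-j}y)$, where $a(y)=\tfrac12\sum_{k\in\Z}\mathrm{a}_k e^{-2\pi i k y}$ is the (renormalized) mask symbol. Here $a$ is a trigonometric polynomial because the mask is finitely supported, it satisfies the normalization $a(0)=1$ forced by convergence, and $\|a\|_\infty\le C<\infty$ is automatic. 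Thus $V_\phi$ is exactly a singly generated shift-invariant space with generalized refinability in which every factor is the same polynomial~$a$.

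Next I would invoke Theorem~\ref{th:main_several_generators} (or equivalently Theorem~\ref{th:structure_H_n1}) to write an arbitrary analytic limit of the scheme in the canonical form $e^{\lambda t}\sum_{k=0}^d p_k(t)\,\omega_k(t)$ with $\deg p_k=k$, $\omega_d\not\equiv 0$, and $\omega_k$ one-periodic analytic functions. Since an analytic limit lies in $V_\phi$, Corollary~\ref{cor:stationary} applies verbatim: it yields $\lambda=0$ and $\omega_k(t)\equiv\text{const}$ for all $k=0,\dots,d$. Substituting back, the limit function becomes $\sum_{k=0}^d c_k\,p_k(t)$, a linear combination of polynomials of degrees $0,1,\dots,d$, hence itself a polynomial of degree $d$. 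That is precisely the assertion.

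The one point that needs care — and which I expect to be the only real obstacle — is making sure the hypotheses of Corollary~\ref{cor:stationary} genuinely hold for the limit function of \emph{any} convergent stationary scheme, not merely for schemes meeting extra regularity assumptions. In particular one must check that $d\le N=\deg a$; but this is exactly the content of Theorem~\ref{th:main_refinable} applied in the stationary case (the sequences in~\eqref{eq.200decay2} have at most $N$ nonzero entries, so the polynomial degree of any analytic limit is bounded by $N$), so there is no circularity. One should also note that if the scheme is not convergent in the classical sense but still produces an analytic limit for some data, the refinability factorization of the relevant generator still holds, so the argument is unaffected. I would therefore present the proof as: reduce to generalized refinability with $a_j\equiv a$, apply Corollary~\ref{cor:stationary}, and read off that the limit is a polynomial.

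\begin{proof}
A convergent stationary subdivision scheme with finitely supported mask $\mathrm{a}$ has basic limit function $\phi=\lim_{j\to\infty}S_{\mathrm a}\cdots S_{\mathrm a}\delta$, whose Fourier transform factorizes as $\widehat{\phi}(y)=\prod_{j=1}^\infty a(2^{-j}y)$ with the trigonometric polynomial $a(y)=\tfrac12\sum_{k}\mathrm a_k e^{-2\pi i k y}$ satisfying $\deg(a)\le N$, $a(0)=1$ and $\|a\|_\infty\le C<\infty$. All limits of the scheme lie in $V_\phi=\operatorname{span}\{\phi(\cdot-\ell):\ell\in\Z\}$. Let $f$ be an analytic limit. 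By Theorem~\ref{th:structure_H_n1}, $f(t)=e^{\lambda t}\sum_{k=0}^d p_k(t)\,\omega_k(t)$ with $\deg p_k=k$, $\omega_d\not\equiv 0$ and $\omega_k$ one-periodic analytic; by Theorem~\ref{th:main_refinable}, $d\le N$. Hence Corollary~\ref{cor:stationary} applies and gives $\lambda=0$ and $\omega_k\equiv\text{const}$ for $k=0,\dots,d$. Therefore $f=\sum_{k=0}^d c_k\,p_k$ is a linear combination of polynomials of degrees $\le d$, i.e. a polynomial.
\end{proof}
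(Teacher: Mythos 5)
Your argument is correct and follows exactly the paper's route: the paper obtains this corollary simply by observing that a stationary scheme is the special case $a_j\equiv a$ of generalized refinability, so that Corollary~\ref{cor:stationary} applies verbatim and forces $\lambda=0$ and constant $\omega_k$, whence the limit is a polynomial. The one caveat is your justification of $d\le N$: Theorem~\ref{th:main_refinable} already carries $d\le N$ as a hypothesis, so it cannot itself be used to establish that bound --- but the paper leaves this hypothesis equally implicit and unaddressed, so your proof is no less complete than the original.
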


\smallskip
\noindent \textbf{Acknowledgement:} {\bf We are deeply indebted to Nira Dyn for profound discussions on the topic of this paper.}
Maria Charina was sponsored by the Austrian Science Foundation (FWF) grant P28287-N35.
Vladimir Protasov is supported by RFBR grants No 17-01-00809 and 19-04-01227.
Both authors are grateful to the Erwin Schroedinger Institute, Vienna, Austria, for the productive discussion atmosphere.



\begin{thebibliography}{99}

\bibitem{signals0} T.~Bendory, S.~Dekel and A.~Feuer, Exact recovery of non-uniform splines from the projection onto spaces of algebraic polynomials, J. Approx. Theory,  182 (2014) 7-17.

\bibitem{BDR1} C.~de~Boor, R.~A.~DeVore and A.~Ron,  Approximation from
shift-invariant subspaces of $L_2(\R^d)$, Trans. Amer. Math. Soc., 341 (1994) 787-806.

\bibitem{BDR2} C.~de~Boor, R.~A.~DeVore and A.~Ron, The structure of finitely generated
shift-invariant spaces in $L_2(\R^d)$, J. Funct. Anal., 119 (1994) 37-78.

\bibitem{BDR3} C.~de~Boor, R.~A.~DeVore and A.~Ron, Approximation orders of FSI spaces
in $L_2(\R^d)$, Constr. Approx., 14 (1998) 631-652.

\bibitem{CHM1} C.~Cabrelli, C.~Heil and U.~Molter, Accuracy of lattice translates of several multidimensional
refinable functions, J. Approx. Theory, 95 (1998) 5-52.

\bibitem{CHM2}
C.~A.~Cabrelli, C.~Heil and U.~M.~Molter, Self-similarity and multiwavelets in higher dimensions,
Memoirs Amer. Math. Soc., 170 (2004) no.~807.

\bibitem{CDM} A.~S.~Cavaretta, W.~Dahmen and C.~A.~Micchelli, Stationary
Subdivision, Mem. Amer. Math. Soc., 453 (1991) i-vi; 1-185.

\bibitem{CCGP} M.~Charina, C.~Conti, N.~Guglielmi and V.~Yu.~Protasov,
Regularity of non-stationary subdivision: a matrix approach, Numer. Math., 135 (2017) 639-678.

\bibitem{CCJZ} M.~Charina, C.~Conti, K.~Jetter and G.~Zimmermann,
Scalar multivariate subdivision schemes and Box splines, J. of CAGD, 28 (2011) 285-306.

\bibitem{CharinaContiRomani13} M.~Charina, C.~Conti and L.~Romani, Reproduction of exponential polynomials
by multivariate non-stationary subdivision schemes with a general dilation
matrix, Numer. Math., 127 (2014) 223-254.

\bibitem{ChuiV}
C.~Chui and J.~de~Viles, {\em  Wavelet Subdivision Methods: GEMS for Rendering Curves and Surfaces},
CRC Press, 2011.

\bibitem{CohenDyn} A. Cohen and N. Dyn, Nonstationary subdivision schemes
and multiresolution analysis, SIAM J. Math. Anal., 27 (1996) 1745-1769.

\bibitem{CH} D.~Colella and C.~Heil, Characterizations of scaling functions: continuous solutions,
SIAM J. Matrix Anal. Appl., 15 (1994) 469-518.

\bibitem{ContiCotroneiRomani} C.~Conti, M.~Cotronei and L.~Romani,
Beyond B-splines: exponential pseudo-splines and subdivision schemes reproducing exponential polynomials, Dolomities
research notes on approximation, 10 (2017) 31-42.

\bibitem{ContiRomani11} C.~Conti and L.~Romani, Algebraic conditions on non-stationary subdivision symbols for
exponential polynomial reproduction, J. Comput. Appl. Math., 236 (2011) 543-556.

\bibitem{ContiRomaniYoon} C.~Conti, L.~Romani and J.~Yoon, Approximation order and approximate sum rules in subdivision,
J. Approx. Theory, 207 (2016) 380-401.

\bibitem{Conway} J.~B.~Conway, \newblock {\em Functions of one complex variable}, Springer, 1978.

\bibitem{DS} X.~R.~Dai and  J.~Q.~Song,
Summation and intersection of refinable shift invariant spaces,
Science China Math., 54 (2011) 20-87.

\bibitem{Dyn} N.~Dyn,  Subdivision schemes in CAGD, in Advances in Numerical Analysis Vol. II:
Wavelets, Subdivision Algorithms and Radial Basis Functions,
W. A. Light, (ed), Oxford University Press, Oxford, 1992, 36-104.


\bibitem{DynLevinSurvey} N. Dyn and D. Levin, Subdivision schemes in geometric modeling, Acta Numer., 11 (2002) 73-144.

\bibitem{DLL03} N. Dyn, D. Levin and A. Luzzatto, Exponentials reproducing
subdivision schemes, Found. Comput. Math., 3 (2003) 187-206.

\bibitem{DR} N. Dyn and A. Ron, Local approximation by certain spaces of exponential polynomials,
approximation order of exponential box splines, and related interpolation problems,
Trans. Amer. Math. Soc., 319 (1990) 381-403.

\bibitem{FO} V~I.~Filippov and P.~Oswald, Representation in $L_p$ by series of translates and dilates
of one function, J. Approx. Theory, 82 (1995) 15-29.

\bibitem{Han2} B. Han and Z. Shen, Compactly supported symmetric $C^\infty$ wavelets with
spectral approximation order, SIAM J. Math. Anal., 40 (2008) 905-938.

\bibitem{JePlo}
K.~Jetter and G.~Plonka, A survey on $L_2$-approximation orders
from shift-invariant spaces, {\it in:} N.~Dyn, D.~Leviatan,
D.~Levin, and A.~Pinkus (eds.), {\it Multivariate Approximation
and Applications}, Cambridge University Press, Cambridge, 2001,
73-111.

\bibitem{Jia-line} R.~Q.~Jia, Shift-invariant spaces on the real line,
Proc. of the Amer. Math. Soc.,  125 (1997)  785-793.


\bibitem{Jia98} R.~Q.~Jia, Approximation properties of multivariate wavelets,  Math. Comp.,  67 (1998)
647-665.

\bibitem{JiaJiang} R.~Q.~Jia and Q.~Jiang, Approximation power of refinable vectors of
functions, in: Wavelet Analysis and Applications, Studies in Advanced Mathematics 25,
American Mathematical Society, Providence, RI, 2002, 155-178.

\bibitem{ALevin} A. Levin, Polynomial generation and quasi-interpolation in stationary
non-uniform subdivision, Comput. Aided Geom. Design, 20 (2003)
41-60.

'

\bibitem{MS97} C.~A.~Micchelli and T.~Sauer, Regularity of
Multiwavelets, Adv. In Comp. Math.,  7 (1997) 455-545.

\bibitem{ReifPeter08}
J.~Peter and U.~Reif, {\em  Subdivision Surfaces, Geometry and Computing},
Springer-Verlag, Berlin, 2008.

\bibitem{Plonka} G.~Plonka, Approximation order provided by refinable function vectors,
Constr. Approx., 13 (1997) 221-224.

\bibitem{PlonkaRon} G.~Plonka and A.~Ron,	A new factorization technique of the matrix mask of univariate refinable functions,
Numer. Math., 87 (2001) 555-595.
	
\bibitem{Protasov_book} I.~Ya.~Novikov, V.~Yu.~Protasov and M.~A.~Skorpina, {\em  Wavelet theory}, AMS, Translations
of mathematical monographs, 239, 2011.


\bibitem{Protasov2} V.~Yu.~Protasov, On the decay of infinite products of trigonometric polynomials,
Math. Notes, 72 (2002) 819-832.

\bibitem{Ron_exp_box_splines} A.~Ron, Exponential box splines, Constr. Approx., 4
(1988) 357-378.

\bibitem{Ron_intro_s_i_spaces} A.~Ron, Introduction to shift-invariant spaces: Linear independence.
In Multivariate Approximation and Applications; Pinkus, A., Leviatan, D., Dyn, N., Levin, D., Eds.;
Cambridge University Press: Cambridge, UK, 2001, 112-151.

\bibitem{Rvachev} V.~A.~Rvachev, Compactly supported solutions of functional-differential equations and their applications,
in Russian Mathematical Surveys, 45 (1990) 87-120.

\bibitem{signals1} P.~Stoica, R.~L.~Moses, {\em  Spectral Analysis of Signals}, Pearson/Prentice Hall, Upper Saddle River, NJ, 2005.

\bibitem{Ter} P.~A.~Terekhin, Affine synthesis in the space $L_2(R^d)$, Izv. Math., 73(2009) 171-180.

\bibitem{signals2} M.~Vetterli, P.~Marziliano and T.~Blu, Sampling signals with finite rate of innovation,
IEEE Trans. Signal Process, 50 (2002)  1417-1428.

\bibitem{VonBluUnser07} C.~Vonesch, T.~Blu and M.~Unser, Generalized Daubechies Wavelet Families,
IEEE Trans. Signal Process., 55 (2007) 4415-4429.

\bibitem{W}
J.~Warren and H.~Weimer, {\em Subdivision methods for geometric design},
Morgan-Kaufmann, 2002.

\end{thebibliography}
\end{document}